\newtheorem{theorem}{Theorem}[section]
\newtheorem{lemma}{Lemma}[section]
\newtheorem{proposition}[theorem]{Proposition}
\newtheorem{corollary}[theorem]{Corollary}
\newtheorem{definition}{Definition}[section]
\def\bkE{{\rm I\kern-.17em E}}
\def\bk1{{\rm 1\kern-.17em l}}
\def\bkD{{\rm I\kern-.17em D}}
\def\bkR{{\rm I\kern-.17em R}}
\def\bkP{{\rm I\kern-.17em P}}
\def\Cbb{{\mathbb{C}}}
\def\fnat{{\bf F}^{\textrm{nat}}}
\def\bkZ{{\bf{Z}}}
\def\bkE{{\rm I\kern-.17em E}}
\def\bk1{{\rm 1\kern-.17em l}}
\def\bkD{{\rm I\kern-.17em D}}
\def\bkR{{\rm I\kern-.17em R}}
\def\bkP{{\rm I\kern-.17em P}}
\newcommand{\scg}{shared-constraint game\@\xspace} 
\newcommand{\scgs}{shared-constraint games\@\xspace}
\def\bkZ{{\bf{Z}}}
\def\b12{(\beta_1,\beta_2)}
\def\Epec{{\mathscr{E}}}
\newenvironment{proof}[1][]{{\noindent \bf Proof #1: }}{\hfill \qed \vspace{6pt}\\ }
\newcounter{example}
\renewcommand{\theexample}{\thesection.\arabic{example}}
\newenvironment{examplec}[1][]{\refstepcounter{example}
\par\medskip \noindent%
   \textbf{Example~\theexample. #1} \rmfamily}{\hfill$\square$\vspace{6pt}}
\newcounter{remark}
\renewcommand{\theremark}{\thesection.\arabic{remark}}
\newenvironment{remarkc}[1][]{\refstepcounter{remark}
\par\medskip \noindent%
   \textbf{Remark~\theremark. #1} \rmfamily}{\hfill $\square$\vspace{6pt}}
\def\t{^\top}
\def\th{^{\rm th}}
\newlength{\noteWidth}
\long\def\notes#1{\ifinner
{\tiny #1}
\else
\marginpar{\parbox[t]{\noteWidth}{\raggedright\tiny #1}}
\fi\typeout{#1}}
 \def\notes#1{\typeout{read notes: #1}} 
\def\mi{^{-i}}
\newcommand{\I}[1]{\mathbb{I}_{\{#1\}}}
\newcommand{\ie}{i.e.\@\xspace} 
\newcommand{\eg}{e.g.\@\xspace} 
\newcommand{\etal}{et al.\@\xspace} 
\newcommand{\Real}{\ensuremath{\mathbbm{R}}}
\newcommand{\dom}{\mathrm{dom}}
\newcommand{\minimize}[1]{\displaystyle\minim_{#1}}
\newcommand{\minim}{\mathop{\hbox{\rm minimize}}}
\DeclareMathOperator*{\st}{subject\;to}
\def\subject{\hbox{\rm subject to}}
\def\fnat{{\bf F}^{\rm nat}}
\def\Cbb{\mathbb{C}}
\def\ind{{\rm ind}}
\def\range{{\rm range}}
\def\half  {{\textstyle{1\over 2}}}
\def\spose#1{\hbox to 0pt{#1\hss}}
\def\text #1{\hbox{\quad#1\quad}}
\def\nthinsp{\mskip -2   mu}
\def\S{_{\scriptscriptstyle S}}
\def\superstar{^{\raise 0.5pt\hbox{$\nthinsp *$}}}
\def\SUPERSTAR{^{\raise 0.5pt\hbox{$*$}}}
\def\lamstarT {\lambda^{\raise 0.5pt\hbox{$\nthinsp *$}T}}
\def\lambar{\bar\lambda}
\def\lambdabar{\lambar}
\def\mubar{\skew3\bar \mu}
\def\Ascr{{\cal A}}
\def\Bscr{{\cal B}}
\def\Fscr{{\cal F}}
\def\Iscr{{\cal I}}
\def\Lscr{{\cal L}}
\def\Tscr{{\cal T}}
\def\Sscr{{\cal S}}
\def\Nscr{{\cal N}}
\def\Rscr{{\cal R}}
\def\Gscr{{\cal G}}
\def\aur{\;\textrm{and}\;}
\def\eef{\;\textrm{if}\;}
\def\non{\nonumber}
\def\SOL{{\rm SOL}}
\def\VI{{\rm VI}}
\let\forallnew\forall
\renewcommand{\forall}{\forallnew\ }
\let\forall\forallnew
\newcommand{\mlmfg}{multi-leader multi-follower game\@\xspace}
\newcommand{\mlmfgs}{multi-leader multi-follower games\@\xspace}
\def\ds{\displaystyle}
		\def\bkE{{\rm I\kern-.17em E}}
		\def\bk1{{\rm 1\kern-.17em l}}
		\def\bkD{{\rm I\kern-.17em D}}
		\def\bkR{{\rm I\kern-.17em R}}
		\def\bkP{{\rm I\kern-.17em P}}
		\def\bkY{{\bf \kern-.17em Y}}
		\def\bkZ{{\bf \kern-.17em Z}}
		\def\bkC{{\bf  \kern-.17em C}}
		\def\bsp{\begin{split}}
		\def\beq{\begin{eqnarray}}
		\def\bal{\begin{align*}}
		\def\bc{\begin{center}}
		\def\be{\begin{enumerate}}
		\def\bi{\begin{itemize}}
		\def\bs{\begin{small}}
		\def\bS{\begin{slide}}
		\def\ec{\end{center}}
		\def\ee{\end{enumerate}}
		\def\ei{\end{itemize}}
		\def\es{\end{small}}
		\def\eS{\end{slide}}
		\def\eeq{\end{eqnarray}}
		\def\eal{\end{align*}}
		\def\esp{\end{split}}
		\def\qed{ \vrule height7.5pt width7.5pt depth0pt}  
		\def\problemsmall#1#2#3#4{\fbox
		 {\begin{tabular*}{0.48\textwidth}
			{@{}l@{\extracolsep{\fill}}l@{\extracolsep{6pt}}l@{\extracolsep{\fill}}c@{}}
				#1 & $\minimize{#2}$ & $#3$ & $ $ \\[5pt]
					 & $\subject\ $    & $#4$ & $ $
			\end{tabular*}}
			}
		\def\problem#1#2#3#4{\fbox
		 {\begin{tabular*}{0.80\textwidth}
			{@{}l@{\extracolsep{\fill}}l@{\extracolsep{6pt}}l@{\extracolsep{\fill}}c@{}}
				#1 & $\minimize{#2}$ & $#3$ & $ $ \\[5pt]
					 & $\subject\ $    & $#4$ & $ $
			\end{tabular*}}
			}
	\def\cp2problem#1#2#3#4{\fbox
		 {\begin{tabular*}{0.9\textwidth}
			{@{}l@{\extracolsep{\fill}}l@{\extracolsep{6pt}}l@{\extracolsep{\fill}}c@{}}
				#1 & & $#4 $ 
			\end{tabular*}}}
\newcommand{\pmat}[1]{\begin{pmatrix} #1 \end{pmatrix}}
		\renewcommand{\emph}[1]{\textbf{#1}}
		\def\bkE{{\rm I\kern-.17em E}}
		\def\bk1{{\rm 1\kern-.17em l}}
		\def\bkD{{\rm I\kern-.17em D}}
		\def\bkR{{\rm I\kern-.17em R}}
		\def\bkP{{\rm I\kern-.17em P}}
		\def\bkZ{{\bf{Z}}}
\newcommand {\beeq}[1]{\begin{equation}\label{#1}}
\newcommand {\eeeq}{\end{equation}}
\newcommand {\bea}{\begin{eqnarray}}
\newcommand {\eea}{\end{eqnarray}}
\def\texitem#1{\par\smallskip\noindent\hangindent 25pt
               \hbox to 25pt {\hss #1 ~}\ignorespaces}
\def\st{\mbox{subject to}}
\def\texitem#1{\par\smallskip\noindent\hangindent 25pt
               \hbox to 25pt {\hss #1 ~}\ignorespaces}
\def\st{\mbox{subject to}}
\def\PS{{\rm P}^{\rm ae}}
\def\OmegaS{\Omega^{\rm ae}}
\def\LiS{{\rm L}^{\rm ae}_i}
\def\UpsilonS{\Upsilon^{\rm ae}}
\def\EpecS{\Epec^{\rm ae}}
\def\FscrS{\Fscr^{\rm ae}}
\def\Iscr{\mathcal I}
\def\bl{\rm bl}
\renewcommand{\emph}[1]{\textbf{#1}}
\def\bkE{{\rm I\kern-.17em E}}
\def\bk1{{\rm 1\kern-.17em l}}
\def\bkD{{\rm I\kern-.17em D}}
\def\bkR{{\rm I\kern-.17em R}}
\def\bkP{{\rm I\kern-.17em P}}
\def\bkZ{{\bf{Z}}}
\def\b12{(\beta_1,\beta_2)}
\def\Epec{{\mathscr{E}}}
\newcommand{\gap}{\vspace{0in}}
\def\us#1{{{\color{red}#1}}}
\def\ae{\rm ae}
\def\cc{\rm cc}
\def\bl{\rm bl}
\begin{document}
\title{\bf A Shared-Constraint Approach to Multi-leader Multi-follower Games}
\author{Ankur A. Kulkarni \and Uday V. Shanbhag\thanks{The first author
is with the Systems and Control Engineering group at the Indian Insitute
	of Technology Bombay, Mumbai 400076, India, while the second author
	is with the Department of Industrial and Manufacturing Engineering
	at the Pennsylvania State University at University Park.  They  are
	reachable at ({\tt kulkarni.ankur@iitb.ac.in,udaybag@psu.edu}).  The
	work of the second author has been partially funded by the
	NSF CMMI 124688 (CAREER). Finally, the authors would like to thank
	Profs.  T.~Ba\c{s}ar and J.-S.\ Pang for their suggestions and
	comments.}}
\date{}
\maketitle
\begin{abstract}
Multi-leader multi-follower games are a class of hierarchical games in
which a collection of leaders compete in a Nash game constrained by the
equilibrium conditions of another  Nash game amongst the followers.
{The resulting \textit{equilibrium problem with equilibrium constraints} is complicated by nonconvex
agent problems and therefore providing tractable conditions for
existence of global or even local equilibria for it has proved
challenging.} Consequently, much of the extant research on this
topic is either model specific or relies on weaker notions of
equilibria.  We consider a \textit{modified} formulation in which every leader is
cognizant of the equilibrium constraints of all leaders. Equilibria of this
modified game {\em contain} the equilibria, if any, of the
original game. 
The new formulation has a constraint structure called {\em shared constraints},
and our main result shows that if the leader objectives admit a potential function,
the global minimizers of the potential function over the
shared constraint  are equilibria of the modified formulation.  
We provide another existence result using fixed point theory that does not require potentiality.  
Additionally, local minima, B-stationary, and
strong-stationary  points of this minimization
are shown to be local Nash equilibria, Nash
B-stationary, and Nash strong-stationary points of the
corresponding \mlmfg. We demonstrate the relationship between {\em variational equilibria}
associated with this modified shared-constraint game and
equilibria of the
original game from the standpoint of the multiplier sets and show how equilibria of the original formulation may be recovered. We note
through several examples that such {potential} multi-leader
multi-follower games capture a breadth of application problems of
interest and demonstrate our findings on a multi-leader multi-follower
Cournot game.
\end{abstract}

\section{Introduction} \label{sec:intro}
This paper concerns  \mlmfgs where multiple Stackelberg leaders
participate in a simultaneous move game, multiple followers
particpate in a subsequent simultaneous move game taking the
strategies of the leaders as given and leaders make decisions subject to
the equilibrium conditions arising from the game between followers. This follower-level equilibrium need not be unique as a function of the leader strategy profile and leaders and followers may have a continuum of
strategies, whereby an equilibrium of the game between leaders is
characterized by an analytically difficult problem. This problem is popularly referred to as an {\bf e}quilibrium {\bf
	p}rogram with {\bf e}quilibrium {\bf c}onstraints (EPEC). We are concerned
with the central question of the existence of an equilibrium to this problem.

Games described above arise organically in the modeling of a sequence of
clearings such as the day-ahead and real-time clearings in power
markets. Increases in the computing capability have enabled ``rational''
firms to make decisions with longer time horizons and by taking into
account explicitly the situation that would emerge in later clearings.
Consequently, models of these strategic interactions require a firm to
be not just strategic with respect to other firms but also cognizant
of the real-time market clearing to
follow~\cite{yao08modeling,su07analysis,SIGlynn11}.  Technically such
firms must be modeled as \textit{leaders} that participate in a Nash
game subject to the equilibrium amongst another set of participants
called \textit{followers}. The resulting game  is a \mlmfg.

While such models are indeed reasonable representations of the
hierarchical competitive structure, general results on existence
of equilibria are scarce. Definitive statements on the existence of
equilibria have been obtained mainly for \mlmfgs with specific
structure~\cite{sherali84multiple,sherali83stackelberg,su07analysis} and
for models arising from specific
applications~\cite{demiguel09stochastic,murphy10forward}.  Furthermore,
these results are reliant on the follower-level problem having a clean
structure, in most cases uniqueness of its equilibrium as a function of
the leader strategies, so that upon substituting this equilibrium into
the leader's objectives the resulting {\em implicit problem} has a form
amenable to analysis via standard fixed-point theorems\footnote{A few
	other lines of work have shown the solvability of stationarity
		conditions of the problems of the leaders~\cite{hu07epec,
			SIGlynn11,outrata04note,pang10local,leyffer05multileader,su05equilibrium}.}.
To the contrary, the main results of this paper impose no such requirements and probe EPECs from an entirely new perspective. Our contributions are as follows.
\begin{enumerate}
\item[(i)] {We present
a {\em modified} formulation of multi-leader multi-follower competition
in which there exists a {\em common (or shared) constraint} that constrains each
player's optimization problem~\cite{rosen65existence,facchinei07ongeneralized,kulkarni09refinement}.  
The \textit{conventional} formulation (which has been analyzed by the above surveyed results) of a \mlmfg bears a close resemblance to
shared-constraint game, 
	but it	is technically not a \scg, thereby motivating the need for a modified formulation. 
We show that if the  leader objectives admit a potential function, then
any minimizer of the 
potential function over a shared constraint  is an equilibrium of the
modified game. Furthermore, equilibria of the conventional formulation are equilibria of the modified formulation. {Additionally, we show that local
minimizers, B-stationary points, and strong-stationary points of this potential function over the shared constraint are local Nash
equilibria, Nash B-stationary points, and Nash strong-stationary points of the modified game.} } We  further show how the structure of shared constraints can be exploited 
in games that do not admit potential functions via advanced fixed-point
theorems.
\item [(ii)] We present a clear
understanding of the relationship between equilibria associated with the
two formulations. First, it can be seen that modified game is a
shared-constraint game that admits at least two sets of generalized Nash
equilibria of interest: (i) Equilibria of
the original game; and (ii) Equilibria characterized by a ``common'' or
consistent Lagrange multiplier that can be viewed as \textit{variational
equilibria}~\cite{kulkarni09refinement}  for which existence statements are available. 
\end{enumerate}
	At a high level, this paper is motivated by the view that the
	competition between multiple Stackelberg leaders is not an obscure
	or pathological setting and may thereby admit a mathematical model
	that allows for a reasonably general existence theory.  The
	shared-constraint model is an attempt in this direction. Our
	modified model can be viewed as either an alternative model or it can be
	seen as a	vehicle for  developing existence statements for the
	conventional model. 



The remainder of the paper is organized into five sections.  In
Section~\ref{sec:conventional}, we introduce the conventional formulation and survey \mlmfgs studied in practice
provide some background.  In Section \ref{sec:shared} we present
the modification that leads to a shared-constraint game and present
existence results for it. Recovery of equilibria of the original
formulation is examined in Section~\ref{sec:4}.  We apply our techniques towards the analysis
of a hierarchical Cournot game in Section~\ref{sec:cour} and 
conclude in Section~\ref{conc} with a brief summary.

\section{Multi-leader multi-follower games: examples and background} \label{sec:conventional}
{This section begins with a general formulation for such
	games and the associated equilibrium problem in
	Section~\ref{sec:form}. } In Section~\ref{sec:examples}, we discuss
	several examples considered in literature with the intent of noting
	that in a majority of these instances, the associated objective
	functions of the leaders admit a potential function, thereby also
	noting the utility of this class in practice. Section 2.3 contains a few preliminaries for the results to follow.

\subsection{Conventional formulation {of \mlmfgs}}\label{sec:form}
Let $\Nscr = \{1,2,\hdots,N\}$ denote the set of leaders. In the
conventional formulation of \mlmfgs, leader $i \in \Nscr$ solves a parametrized
optimization problem of the
following kind
$$
	\problem{L$_i(x^{-i},y^{-i})$}
	{x_i,y_i}
	{\varphi_i(x_i,y_i;x^{-i})}
				 {\begin{array}{r@{\ }c@{\ }l}
		x_i &\in& X_i, \\
		y_i & \in& Y_i, \\
		y_i &\in&  \mbox{SOL}(G(x_i,x^{-i},\cdot),K(x_i,x^{-i})),
	\end{array}}
	$$
where, $$x^{-i} \triangleq
	(x_1,\hdots,x_{i-1},x_{i+1},\hdots,x_N) \quad \mbox{and} \quad
	(\bar{x}_i,x^{-i}) \triangleq
	(x_1,\hdots,x_{i-1},\bar{x}_i,x_{i+1},\hdots,x_N).$$  
In this formulation, the leader makes two decisions: his strategy denoted $x_i \in \Real^{m_i}$, 
and his \textit{conjecture} about the equilibrium of the followers, denoted $ y_i $. The choice of the optimal strategy $ x_i $ can vary with the precise follower equilibrium that occurs; if this equilibrium is not unique, the leader must make a conjecture about which of the several follower equilibria will actually emerge. Since this conjecture $ y_i $ influences the choice of $ x_i $, $ y_i $ is also a \textit{decision}. 
The formulation above corresponds to an \textit{optimistic} one since the leader picks the $y_i$ that is most favorable to him~\cite{luo96mathematical}. The \textit{pessimistic} formulation (more standard amongst control theorists~\cite{basar99dynamic}) involves a `$ \min_{x_i}\max_{y_i} $'.

The set of equilibria of the game between followers are given as $\SOL(G(x_i,x^{-i},\cdot),K(x_i,x^{-i}))$, which stands for the solution set of the variational inequality (VI), VI$(G(x,\cdot),K(x))$,  
	parametrized by 	the tuple of leader strategies  $x=(x_1,\hdots,x_{N})$.  For each $x$, we let
\begin{equation}
\Sscr(x) \triangleq \mbox{SOL}(G(x_i,x^{-i},\cdot),K(x_i,x^{-i})). \label{eq:sscr}
\end{equation} 
Throughout we assume that $ K $ is continuous as a set-valued map and $ G $ is a continuous mapping of all variables.

The sets $X_i$ and $Y_i$ are assumed to
be closed convex sets.  For each $i$, objective function $\varphi_i: X \times Y \rightarrow \Real$, where $X
\triangleq \prod_{i=1}^N X_i$ and $Y \triangleq \prod_{i=1}^N Y_i$, is 
assumed to be continuous.  Let $y =
(y_1,\hdots,y_N)$ and   $\Omega_i(x^{-i},y^{-i})$  be the feasible
region of L$_i(x^{-i},y^{-i})$, {given by}
\begin{align}\label{omegai}
	\Omega_i(x^{-i},y^{-i}) \triangleq 
	\left\{(x_i,y_i) \in \Real^{n_i} \ \left\lvert \ \begin{array}{clc} x_i & \in X_i, \\
y_i &\in Y_i,\\ 
y_i &\in \Sscr(x)
\end{array} \right. \right\}, \end{align}
where $\Real^n$ is the ambient space of the tuple $(x_i,y_i)$. Notice that, $ \Omega_i(x\mi,y\mi) $ is in fact independent of $ y\mi.$ However we use this notation to maintain consistency with other notation we introduce in the context of shared constraints.
Let $\Omega(x,y)$ denote the Cartesian product of $\Omega_i(x\mi,y\mi):$
\begin{equation} \Omega(x,y) \triangleq \prod_{i=1}^N
\Omega_i(x^{-i},y^{-i}).
\label{eq:omega} \end{equation}
An important object in our analysis is the set $\Fscr$ defined as \begin{align}
\Fscr \triangleq \left\{(x,y) \in \Real^n \ \left\lvert \ \begin{array}{clc} x_i & \in X_i, \\
y_i &\in Y_i,\\ 
y_i &\in \Sscr(x), & \quad i = 1,\hdots,N
\end{array} \right. \right\}. \label{eq:fscr} \end{align}
Clearly, $ \Fscr = \{(x,y) \in \Real^n: (x,y) \in \Omega(x,y)\}. $
We refer to $\Omega$ as the {\em feasible region mapping} and denote
this multi-leader multi-follower game or EPEC by $\Epec$. 
\begin{definition}[Global Nash equilibrium]
	Consider the multi-leader multi-follower game $\Epec$. The {\em global
	Nash equilibrium}, or {\em equilibrium}, of $\Epec$ is a point $(x,y) \in \Fscr$ that satisfies the following: 
\begin{align}\label{br}
\varphi_i(x_i,y_i;x^{-i}) &\leq \varphi_i(u_i,v_i;x^{-i})  \quad \forall (u_i,v_i) \in \Omega_i(x^{-i},y^{-i}), \quad i =1,\hdots,N.
\end{align}
\end{definition}
Local notions of equilibria will be defined later in the paper.

\subsection{Examples {of multi-leader multi-follower games}}\label{sec:examples}
The multi-leader multi-follower game is inspired by a strategic game in
economic theory referred to as a Stackelberg
game~\cite{vonstack52theory}. In such a game, the leader is aware of the strategic consequences of the 
follower's reaction and employs that knowledge in making a first move.
The follower observes this move and responds as per its optimization
problem. An  extension to this regime was provided by Sherali et
al.~\cite{sherali83stackelberg} where a set of followers compete in a Cournot
game while a leader makes a decision constrained by the equilibrium of
this game. While multi-leader generalizations were touched upon by
Okuguchi~\cite{okuguchi76expectations}, Sherali~\cite{sherali84multiple}
presented amongst the first models for multi-leader multi-follower
games in a Cournot regime. 
{A majority of multi-leader multi-follower  game-theoretic models
	appear to fall into three broad categories. We provide a short
	description of the games arising in each category}:
\paragraph{Hierarchical Cournot games:} In a hierarchical Cournot game, leaders compete in
a Cournot game and are constrained by the reactions of a set of
followers that also compete in a Cournot game. {We discuss a setting
comprising of $N$ leaders and $M$ followers, akin to that
proposed by Sherali~\cite{sherali84multiple}. Suppose the $i\th$ leader's
decision is denoted by $x_i$ and the follower
strategies conjectured by leader $i$ are collectively denoted by
$\{y^f_i\}_{f=1}^M$ where $f$ denotes the follower index.  
Given the leaders' decisions, follower $f$ participates in a Cournot
game in which it solves the following parametrized problem}:
$$ \problem{F$(\bar{y}^{-f},x)$}
	{y^f}
	{\half c_f (y^f)^2 - y^f p \left( \bar{y} + \bar{x}) \right) }
				 {\begin{array}{r@{\ }c@{\ }l}
y_f & \geq & 0,	
\end{array}}$$
where $p(.)$ denotes the price function associated with the follower Cournot
game, $\half c_f (y^f)^2$ denotes firm $f$'s quadratic cost of
production,  $\bar{x} \triangleq
\sum_i x_i$, $\bar{y} \triangleq \sum_{f} y^f$, and $\bar{y}^{-f}
\triangleq \sum_{j \neq f} y^j.$ Leader $i$ solves {the following parametrized problem:} 
$$	\problem{L$_i(x\mi,y\mi)$}
	{x_i,y_i}
	{\half d_i x_i^2 - x_ip\left(\bar{x}+\bar{y_i})\right)   }
				 {\begin{array}{r@{\ }c@{\ }l}
	y^f_i & = & \mbox{SOL(F}(\bar{y}^{-f}_i,x_i,x\mi)),\quad \forall\ f,\\
	x_i &\geq& 0,
\end{array}} $$   
{where $y_i^f \in \Real$ is leader $i$'s conjecture of follower $f$'s
equilibrium strategy, $y_i \triangleq \{y_i^f\}_{f=1}^M$, $\half d_i
x_i^2$ denotes the cost of production of leader $i$, $x^{-i}
\triangleq \{x_j\}_{j \neq i}$ and $y^{-i}
\triangleq \{y^f_j\}_{j \neq i,f=1}^M.$ The equilibrium of the resulting multi-leader
multi-follower is given by $\{(x_i,y_i)\}_{i=1}^N$ where
$(x_i,y_i)$ is a solution of  L$_i(x\mi,y\mi)$ for $i = 1, \hdots, N$.
In this regime, under identical leader costs,
Sherali~\cite{sherali84multiple} proved the existence and uniqueness of
the associated  equilibrium.  More recently, DeMiguel and
Xu~\cite{demiguel09stochastic} extended this result to stochastic
regimes wherein the price function is uncertain and the leaders solve
expected-value problems.}


\paragraph{Spot-forward markets:} {Motivated by the need to investigate
the role of forward transactions in power markets, there has been much
interest in strategic models where firms compete in the forward market
subject to equilibrium in the real-time market.}  Allaz and
Vila~\cite{allaz93cournot} examined a forward market comprising of two
identical Cournot firms and demonstrated that global equilibria exist in
such markets. 
Su~\cite{su07analysis} extended these existence
statements to a multi-player regime where firms need not have identical
costs. In such an $N$-player setting, given the forward decisions of the
players $\{x_i\}_{i=1}^N$, firm $i$ solves the following parametrized
problem in spot-market:
$$ \problem{S$(z\mi,x)$}
	{z_i}
	{c_iz_i - p \left(\bar{z} \right)(z_i-x_i) }
				 {\begin{array}{r@{\ }c@{\ }l}
z_i & \geq & 0,	
\end{array}}$$
$ y_{i,j} $ is the 
where 
$c_iz_i$ is the linear cost of producing $z_i$ units in the
spot-market, $ \bar{z} = \sum_{j}z_{j} $ and $p(.)$ is the price function in the spot-market. In the
forward market, firm $i$'s objective is given by its overall profit,
		which is given by $-p^f x_i - p(\bar{y}_i) (y_{i,i} - x_i) + c_iy_{i,i}$, where $p^f$ denotes the price in the forward market
		$ y_{i,j} $ is the anticipated equilibrium production by leader $ j $ and $ \bar{y}_i= \sum_jy_{i,j} $. By imposing the
no-arbitrage constraint that requires that $p^f = p(\bar{y}_i)$, the
forward market objective reduces to $c_i y_{i,i} -p(\bar{y}_i)y_{i,i}.$ Firm $i$'s
problem in the forward market is given by the following:
$$ \problem{L$(x^{-i})$}
	{x_i,y_i}
	{c_i y_{i,i} - p(\bar{y}_i)y_{i,i}}
				 {\begin{array}{r@{\ }c@{\ }l}
	y_{i,j} & \in & \mbox{SOL(S}(\bar{y}^{-j}_i,x_i,x\mi)),\quad \forall\ j.
\end{array}}$$
Note that while the spot-forward market problem is closely related to the hierarchical
Cournot game, it has two key distinctions. First, leader $i$'s cost is
a function of forward and spot decisions. Second, every leader's revenue
includes the revenue from the second-level spot-market sales. As a consequence, the
problem cannot be reduced to the hierarchical Cournot game, as observed
by Su~\cite{su07analysis}. In related work, Shanbhag, Infanger and
Glynn~\cite{SIGlynn11} conclude the existence of local equilibria in a
regime where each firm  employs a conjecture of the forward price
function.  Finally, in a constrained variant of the spot-forward game
examined by Allaz and Vila, Murphy and Smeers~\cite{murphy10forward}
prove the existence of global equilibria when firm capacities are
endogenously determined by trading on a capacity market and further
discover that Allaz and Vila's conclusions regarding the benefits of
forward markets may not necessarily hold. In electricity markets, there 
has been work beyond the papers mentioned above, in particular, by 
Henrion, Outrata, and Surowiec~\cite{henrion2009analysis} and Escobar and Jofr\'{e}~\cite{escobar2008equilibrium}.

{We conclude this section with two observations. First, almost all of the
existence results are model-specific and are not more generally
applicable to the class of multi-leader multi-follower games.  Second,
in all of the instances surveyed above, the leader objectives admit
a potential function. For instance, in
hierarchical Cournot games, if the associated price functions are
affine, then the resulting game is a potential \mlmfg
(cf.~\cite{monderer96potential}). In the spot-forward games, the
leader's objectives are dependent only on follower decisions;
consequently, the payoffs are independent of competitive decisions and
this can be immediately seen to be a potential \mlmfg.  } 


\subsection{Preliminaries}
This paper makes extensive use of \textit{shared constraints} and \textit{potential functions}. In this section we review these concepts.
\subsubsection{Background on shared-constraint games} \label{sec:formulations}
\label{sec:coupled}
 Shared-constraint games were introduced by Rosen \cite{rosen65existence}
as a generalization of the classical Nash game.  In a \scg, there exists
a set $\Cbb$ in the product space of strategies such that for any player $i$, and for any
tuple of strategies of other players (denoted $z\mi$), the feasible
strategies $z_i$ for player $i$ are those  that satisfy $(z_i,z\mi) \in
\Cbb$. In an $N$-person shared-constraint Nash game with player payoffs 
denoted by $\{f_1, \hdots, f_N\}$, player $i$ solves:
$$	\problemsmall{A$_i(z\mi)$}
	{z_i}
	{f_i(z_i;z\mi) }
				 {\begin{array}{r@{\ }c@{\ }l}
	(z_i,z\mi) & \in & \Cbb
\end{array}} $$

An equilibrium  $z=
(z_1,\hdots,z_N)$  satisfies the following:
\begin{equation}
z \in \Cbb, \quad f_i(z_1,\hdots,z_N) \leq f_i(z_1,\hdots,\bar{z}_i,\hdots,z_N) \qquad \forall \ \bar{z}_i \ \mbox{s.t.} \ (z_1,\hdots, \bar{z}_i,\hdots, z_N) \in \Cbb, \quad \forall\ i\in \Nscr.\label{eq:rosen} 
\end{equation}
 Equivalently, $z$ is an equilibrium if $z \in
\Omega^\Cbb(z)$ and for all $i$
$$f_i(z_1,\hdots,z_N) \leq f_i(z_1,\hdots,\bar{z}_i,\hdots,z_N) \qquad \forall \ \bar{z}_i \in \Omega_i^\Cbb(z^{-i}),$$ where 
$$\Omega^\Cbb(z) \triangleq \prod_{i=1}^N \Omega_i^\Cbb(z^{-i}) \quad
\aur \quad \Omega_i^\Cbb(z^{-i}) \triangleq \left\{ \bar{z}_i \ | \
		(\bar{z}_i;z^{-i}) \in \Cbb \right\}. $$
It is easy to show~\cite{kulkarni09refinement} that $ z \in \Cbb \iff z \in \Omega^\Cbb(z) $		

		The feasible region mapping $\Omega$ defined in \eqref{eq:omega} (where
				$\Omega_i(x^{-i},y^{-i})$ is the feasible region of L$_i(x^{-i},y^{-i})$) 
		is a shared constraint if $\Omega$ has the following structure: for $(x,y)$ 
		in the domain of $\Omega$,  
		\begin{equation}(u,v) \in \Omega(x,y) 
		\iff (u_i,x^{-i},v_i,y^{-i}) \in \Fscr \quad \forall \ i \in \Nscr. \label{eq:shared}
		\end{equation} 
		(Recall that $\Fscr$ was defined in \eqref{eq:fscr} and is the set of fixed points of $\Omega$).   
		It is easy to check that this condition does not hold in general for the
		mapping $\Omega$, whereby $\Epec$ is in general not a shared constraint game.

Instead $ \Epec $ is a {\em coupled constraint game} or \textit{abstract economy}~\cite{arrow54existence} where constraints of a player are dependent on the choices of other players, but it does not obey the form of \eqref{eq:shared}. In such a game,
	an equilibrium is a point $z$ such that 
$$z \in  \prod_{i=1}^N \Omega^{\rm NS}_i(z^{-i}), \qquad f_i(z_1,\hdots,z_N) \leq f_i(z_1,\hdots,\bar{z}_i,\hdots,z_N) \qquad \forall \ \bar{z}_i \in \Omega^{\rm NS}_i(z^{-i}), \quad \forall  \ i \in \Nscr.$$ 
Here $\Omega^{\rm NS}_i$ is {\em any} set-valued map, not
necessarily of the form  of a shared constraint. 
The key difference between $\Omega^{\rm NS}$ and $\Omega^{\Cbb}$ is that 
$\Omega^\Cbb$ is completely defined by its fixed point set ($\Cbb$), whereas $\Omega^{\rm NS}$ is not.
However in both cases, the equilibrium is a point that lies in the fixed
point set (given by $\cap_{i=1}^N \Cbb_i$ for $\Omega^{\rm NS}$, 
where $\Cbb_i$ is the graph of $\Omega_i^{\rm NS}$). The shared-constraint game is a special case of this with $\Cbb_i=\Cbb_j=\Cbb= \cap_k \Cbb_k$ for all $i,j,k$. 

\gap


\gap

Shared constraint games 
arise naturally when players face a {common constraint}, \eg in a bandwidth sharing game, 
and are an area of flourishing recent research; see \cite{facchinei07generalized,kulkarni09refinement}. 
Less is known in literature about coupled constraint games  without shared 
constraint even with convex constraints. 
On the contrary, much has been said about 
shared constraint games when the common constraint $\Cbb$ is convex (see particularly, 
the works of Rosen \cite{rosen65existence}, Facchinei \etal \cite{facchinei07ongeneralized}, Kulkarni and Shanbhag  
\cite{kulkarni09refinement,kulkarni12revisiting,kulkarni09cdcrefinement} and 
Facchinei and Pang \cite{pang09convex}).


\subsubsection{Potential games} 
Potential games were introduced by Monderer and Shapley~\cite{monderer96potential}. In the context of $ \Epec, $ we say
 \begin{definition}[Potential game]\label{def:pot}
 A multi-leader multi-follower game $ \Epec $ where leaders have objective
 functions $\varphi_i, i \in \Nscr$ is a potential game if  there exists a function $\pi$, called potential function,
 	 such that for all $i\in \Nscr$, for all $(x_i,x\mi) \in X,
 	 (y_i,y\mi) \in Y$ and for all $x_i' \in X_i,y_i' \in Y_i$
 \begin{align}
 \varphi_i(x_i,y_i;x\mi,y\mi)-\varphi_i(x_i',y_i';x\mi,y\mi) = \pi(x_i,y_i;x\mi,y\mi) - \pi(x_i',y_i';x\mi,y\mi). \label{eq:pot}
 \end{align}
 \end{definition}
  If $\varphi_i$ is a continuously differentiable function for $ i = 1,
  \hdots, N$, then it follows~\cite{monderer96potential} that $\pi$ is
  continuously differentiable. In this case $\pi$ is a potential function
  if and only if \begin{equation}
 \nabla_i \varphi_i(x_i,y_i;x\mi,y\mi) = \nabla_i \pi(x_i,y_i;x\mi,y\mi) \quad \quad \forall\ x,y, \forall\ i, \label{eq:potdiff}
 \end{equation}
 where $\nabla_i = \frac{\partial}{\partial (x_i,y_i)}$. \ie, if and only if 
 the mapping 
 \begin{equation}
 F \triangleq (\nabla_1\varphi_1,\hdots,\nabla_N\varphi_N) \label{eq:fdef}
 \end{equation}
   is integrable. The following lemma follows from a well known
 characterization of integrable mappings.
 \begin{proposition} \label{lem:jac} Consider a multi-leader multi-follower
 game in which the objective functions $\varphi_i, i \in \Nscr$ of the
 leaders are continuously differentiable. Then the game is a potential
 game if and only if for all $(x,y) \in X \times Y$, the Jacobian $\nabla
 F(x,y)$ is {a} symmetric matrix.
 \end{proposition}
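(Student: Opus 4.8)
The plan is to prove the proposition as the two implications of a classical integrability criterion, leaning on the equivalence already recorded in \eqref{eq:potdiff}. Writing $z_i \triangleq (x_i,y_i)$ and $z \triangleq (z_1,\ldots,z_N)$, the discussion preceding the statement shows that the game is a potential game if and only if the mapping $F$ of \eqref{eq:fdef} is \emph{integrable}, that is, $F = \nabla \pi$ for some scalar function $\pi$ on $X \times Y$. Thus it suffices to show that $F$ is integrable if and only if its Jacobian $\nabla F$ is symmetric at every $(x,y) \in X \times Y$. I would first record the block structure of $\nabla F$: its $(i,j)$ block is $\nabla_{z_j}\nabla_{z_i}\varphi_i$, so symmetry of $\nabla F$ is exactly the collection of conditions $\nabla_{z_j}\nabla_{z_i}\varphi_i = (\nabla_{z_i}\nabla_{z_j}\varphi_j)\t$ for all $i,j\in\Nscr$, the diagonal blocks being Hessians of the individual $\varphi_i$ and hence automatically symmetric. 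This is the symmetric-Jacobian form of the Monderer--Shapley characterization~\cite{monderer96potential}.

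For the forward implication, suppose the game is potential, so $F = \nabla\pi$. Since $\nabla F$ is assumed to exist (so the $\varphi_i$, and hence $\pi$, are twice continuously differentiable), $\nabla F = \nabla^2 \pi$ is the Hessian of $\pi$, which is symmetric everywhere by the equality of mixed second partials (Schwarz's theorem). This direction is immediate.

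The converse is where the work lies, and is the integrability (Poincar\'e) half of the statement. Assume $\nabla F$ is symmetric throughout $X \times Y$. Because each $X_i$ and $Y_i$ is convex, the domain $X \times Y$ is convex, hence star-shaped about any fixed base point $z^0$; I would construct a candidate potential by the line integral
\[ \pi(z) \triangleq \int_0^1 F\bigl(z^0 + t(z - z^0)\bigr)\t (z - z^0)\, dt, \]
which is well defined precisely because the segment $[z^0,z]$ stays in the convex domain. Differentiating under the integral sign and using the symmetry $\nabla F = (\nabla F)\t$, each component of the resulting integrand becomes the total $t$-derivative of $t \mapsto t\,F_k\bigl(z^0 + t(z - z^0)\bigr)$; the fundamental theorem of calculus then collapses the integral to $F(z)$, giving $\nabla\pi = F$, so that $F$ is integrable and, by \eqref{eq:potdiff}, the game is a potential game. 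The main obstacle is exactly this computation --- verifying $\nabla\pi = F$ is where the symmetry hypothesis is consumed --- together with the mild technical point that, since $X \times Y$ need not be open, one should either carry out the construction on the interior and pass to the closure by continuity, or assume the $\varphi_i$ are defined and smooth on an open neighborhood of $X \times Y$.
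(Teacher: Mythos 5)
Your proof is correct, and it follows the same mathematical route the paper intends: reduce potentiality of the game to integrability of the map $F$ in \eqref{eq:fdef} via the equivalence \eqref{eq:potdiff}, then characterize integrability by symmetry of $\nabla F$. The difference is that the paper supplies no proof at all --- it simply asserts that the proposition ``follows from a well known characterization of integrable mappings,'' citing \cite{monderer96potential} --- whereas you prove that characterization from scratch: Schwarz's theorem gives the forward direction, and the Poincar\'e-lemma line-integral construction, which is legitimate here precisely because $X \times Y$ is convex, gives the converse; your differentiation under the integral sign and collapse of the integrand to a total $t$-derivative is the standard argument and is carried out correctly. You also flag two technical points the paper glosses over, which is a genuine improvement: the statement implicitly requires the $\varphi_i$ to be twice continuously differentiable for $\nabla F$ to exist at all, and $X \times Y$ need not be open, so the construction should be performed on an open neighborhood or on the interior with a continuity argument to pass to the closure. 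In short, you prove what the paper only cites, and the extra care with the hypotheses is warranted rather than superfluous.
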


\section{Existence statements for the shared-constraint formulation}\label{sec:shared}
In this section, 	we present a shared constraint modification of
the conventional formulation and existence results for it. We begin with
an illustrative example in Section~\ref{sec:anotherpf} and provide a general
formulation in Section~\ref{sec:32}. Sufficiency conditions for the
existence of global and Nash-stationary equilibria are derived in
Sections~\ref{sec:33} and ~\ref{sec:34}, respectively. The section
concludes with Section~\ref{sec:35} which provides an analysis of existence of global equilibria via
fixed-point theory. 

\subsection{Motivation: The Pang and Fukushima example~\cite{pang05quasi}} \label{sec:anotherpf}
To motivate the modified model we recall the example Pang and Fukushima~\cite{pang05quasi} presented to make the point that even simple \mlmfgs may not admit equilibria in pure strategies. We then analyze a modified version of this example that captures the spirit of the modified formulation we present. 
\begin{examplec}[A modified version of the Pang and
Fukushima example~\cite{pang05quasi}:]\label{ex:pfshared}
Pang and Fukushima consider a multi-leader multi-follower game comprising of two leaders
 and one follower~\cite{pang05quasi}.  The follower is solves the optimization problem
$$ \min_{y \geq 0} \left\{y(-1+x_1+x_2) + \half y^2\right\} = \max
\left\{ 0, 1-x_1-x_2\right\}$$ 
Leaders solve the following optimization problems.
$$	\problemsmall{L$_1(x_2)$}
	{x_1,y_1}
	{\varphi_1(x_1,y_1) = \half x_1 +  y_1 }
				 {\begin{array}{r@{\ }c@{\ }l}
	x_1 &\in& [0,1] \\
y_1 &=& \max \{ 0, 1-x_1-x_2\}
\end{array}}\ \problemsmall{L$_2(x_1)$}
	{x_2,y_2}
	{\varphi_2(x_2,y_2) = -\half x_2 -y_2 }
				 {\begin{array}{r@{\ }c@{\ }l}
x_2 & \in & [0,1] \\
y_2 &=& \max \{0, 1-x_1-x_2\}	
\end{array}} $$
where $X_1 = X_2 = [0,1]$ and $Y = \Real$. 
By substituting for $y_1$ (respectively, $ y_2 $), we find that L$_1(x_2)$ is a convex problem for any $x_2$ but  L$_2(x_1)$ is not a convex problem in the space of $ x_2 $. Specifically, L$_2(x_1) $ can be
rewritten as 
$$\problemsmall{L$_2(x_1)$}
	{x_2}
	{\min \left(-\half x_2, -1+x_1 -\half x_2\right)}
				 {\begin{array}{r@{\ }c@{\ }l}
x_2 & \in & [0,1]. 	
\end{array}} $$
The reaction maps $ \Rscr_1:X_2 \rightarrow X_1,\Rscr_2: X_1 \rightarrow X_2 $ that capture the best response for players $ 1,2 $ in the $(x_1,x_2)$ space are given by the
following: 
 \begin{align*}
 \Rscr_1(x_2) &= \{1-x_2\}\ \ \forall x_2 \in [0,1] \qquad \mbox{ and } \qquad \Rscr_2(x_1) = \begin{cases}
                                              \{0\} & x_1 \in [0,\half)\\
\{0,1\} & x_1 = \half \\
\{1\} & x_2 \in (\half,0].   \end{cases}
\end{align*}
It is easy to see that $\Rscr \triangleq \Rscr_1\times \Rscr_2$ has no fixed point whereby this game has no equilibrium. 
Finally, note that this is game is a potential game and it admits a potential function in the $(x,y)$ space given by: 
\[\pi(x,y) = \varphi_1(x_1,y_1) + \varphi_2(x_2,y_2) = \half x_1 +  y_1 -\half x_2 -y_2.\]


Consider the following modification of this example. Leader $1$ has an additional constraint, `$y_2 = \max \{0, 1-x_1-x_2\}$', which in
the original problem appeared in leader $2$'s optimization problem. Likewise, leader
$2$ now has an additional constraint `$y_1 = \max \{0, 1-x_1-x_2\}$', which
in the original problem,  was in leader $1$'s optimization problem. More
specifically,
\begin{itemize}
	\item \textit{both leaders} are constrained by \textit{both
		equilibrium constraints};
	\item Leader $i$'s problem is parametrized  the decisions of {\em rival
	leaders} (denoted by $x\mi$) and {\em the other leader's
conjectures about the follower equilibrium} (denoted
			by $y\mi$).
\end{itemize}
$$	\problemsmall{L$_1(x_2,y_2)$}
	{x_1,y_1}
	{\varphi_1(x_1,y_1) = \half x_1 +  y_1 }
				 {\begin{array}{r@{\ }c@{\ }l}
	x_1 &\in& [0,1] \\
y_1 &=& \max \{ 0, 1-x_1-x_2\}\\
y_2 &=& \max \{0, 1-x_1-x_2\}	
\end{array}}\ \problemsmall{L$_2(x_1,y_1)$}
	{x_2,y_2}
	{\varphi_2(x_2,y_2)= -\half x_2 -y_2 }
				 {\begin{array}{r@{\ }c@{\ }l}
x_2 & \in & [0,1] \\
y_1&=&\max\{0, 1-x_1-x_2\}\\
y_2 &=& \max \{0, 1-x_1-x_2\}	
\end{array}} $$
We claim that $((x_1,x_2),(y_1,y_2))= ((0,1),(0,0))$ is an equilibrium of this modified game.
To see why this is true,  observe that  Leader $1$ gets $\varphi_1(0,0) =
0$ whereas leader $2$ gets $\varphi_2(1,0)=-\half$.  Leader $1$'s
global minimum is $0$ and he thus has no incentive to deviate from this strategy. Leader $2$'s strategy set at equilibrium reduces to a singleton
containing only his equilibrium strategy. This is induced by the
presence of leader $1$'s equilibrium constraint in his optimization
problem (the constraint $y_1 = \max \{0, 1-x_1,x_2\}$ is, at
		equilibrium, equivalent to $ 0 = \max \{0, 1-x_2\}$; together
		with the constraint $x_2 \in [0,1]$ this implies $x_2 =1$ and
		$y_2=0$.)  
\end{examplec}

In the following section we generalize the approach adopted in this example. The modified game has shared constraints even while the original does not. We show (Theorem \ref{thm:pot}) that a {potential game} with
shared constraints admits an equilibrium under mild conditions. Indeed global
minimizers of the potential function over the shared constraint are
equilibria of this game. In this game, the shared constraint is given by
the set $$\Fscr^{\ae} =
\left\{(x_1,x_2,y_1,y_2) \left| (x_1,x_2) \in [0,1]^2, (y_1,y_2) \geq 0, \   \ \begin{aligned}
		y_1 =\max \{ 0,
	1-x_1-x_2\} \\
	y_2 =\max \{ 0,
	1-x_1-x_2\} 
		\end{aligned} \right. \right\}.$$ 
	We will explain the notation $\Fscr^{\ae}$ in the following sections. 
The global minimizer of $\pi$ over $\Fscr^{\ae}$ is 
\begin{align*}\arg\min_{(x,y)\in \Fscr^{\ae}} \half x_1 +  y_1 -\half x_2 -y_2&= ((0,1),(0,0)),
\end{align*}
which is indeed the equilibrium.

\subsection{Modification: Leaders sharing all equilibrium
	constraints}\label{sec:32}
Consider the formulation in which the $i^{\rm th}$ leader solves the following optimization problem. 
$$
	\problem{L$^{\ae}_i(x^{-i},y^{-i})$}
	{x_i,y_i}
	{\varphi_i(x_i,y_i;x^{-i})}
				 {\begin{array}{r@{\ }c@{\ }l}
		x_i &\in& X_i, \\
		y_i &\in & Y_i, \\
		y_j &\in& \Sscr(x), \qquad j =1,\hdots,N.
 	\end{array}}
	$$
We  denote this game by $\Epec^{\ae}$ and note that  
the difference between $\Epec^{\ae}$ and $\Epec$ is that {\em all} constraints 
$y_j \in \Sscr(x),$ $j=1,\hdots,N$ are now a part of {\em each} leader's 
optimization problem. In effect, each leader takes into account the
conjectures regarding the follower equilibrium made by all other
leaders.  The result is that for any $i$, $y_i$ satisfies the same
constraints in problems L$_i$ and L$_i^{\ae}$, but $x_i$ is constrained
by additional constraints in L$_i^{\ae}$.  


\gap

For $y_j \in Y_j, x_j \in X_j$ for $j\neq i$, let
$\Omega_i^{\ae}(x^{-i},y^{-i})$ be the feasible region of
L$_i^{\ae}(x^{-i},y^{-i})$ and let $\Omega^{\ae}, \Fscr^{\ae}, \Sscr^N$
and $\Gscr$ be defined as 
\begin{align}
\Omega^{\ae}(x,y) &\triangleq \prod_{i=1}^N \Omega_i^{\ae}(x^{-i},y^{-i}), \quad &\Fscr^{\ae} &\triangleq \{ (x,y) \ | \ (x,y) \in \Omega^{\ae}(x,y) \}, \\ 
\Sscr^N(x)&\triangleq \prod_{i=1}^N \Sscr(x), \quad &\Gscr &\triangleq \{(x,y) \ | y \in  \Sscr^N(x)\}, \label{eq:sscrn}
\end{align}
where $\Gscr$ is the graph of $\Sscr^N$ and $\Fscr^{\ae}$ is the set of fixed points of $\Omega^{\ae}$. An equilibrium of $\Epec^{\ae}$ is a point 
$$ (x,y) \in \Fscr^{\ae},\ \mbox{such that} \quad \varphi_i(x_i,y_i;x^{-i}) \leq \varphi_i(\bar{x}_i,\bar{y}_i;x^{-i}) \qquad \forall \ (\bar{x}_i,\bar{y}_i) \in \Omega^{\ae}_i(x^{-i},y^{-i}), \forall i.$$
\begin{proposition}\label{eq-orig-ae}
Consider the multi-leader multi-follower game defined by $\Epec^{\ae}.$
Then the following hold:
\begin{enumerate}
	\item[(i)] The mapping $\Omega^{\ae}(x,y)$ is a shared constraint mapping
		satisfying \eqref{eq:shared};
	\item[(ii)] A point $(x,y)$ is a fixed point of $\Omega^{\ae}$ if and only
		if it is a fixed point of $\Omega$. \ie, $ \Fscr= \Fscr^{\ae} $;
	\item [(iii)] Every equilibrium of $\Epec$ is an equilibrium of $\Epec^{\ae}$. \label{lem:ae}
	\item[(iv)] $ \FscrS $ is a closed set.
\end{enumerate}
\end{proposition}
\begin{proof}
	\begin{enumerate}
		\item[(i)] It {can be seen that for any $i$ and any
		$x\mi,y\mi$, where  $x_j \in X_j,y_j \in Y_j$ for all $j \neq
			i$, we have that}
\begin{align*}
	\Omega_i^{\ae}(x^{-i},y^{-i}) & = {\{x_i,y_i \ | \ x_i \in X_i, y_i
	\in Y_i, y_j \in \Sscr(x) \mbox{ for }  j= 1, \hdots, N\}} \\
			& = {\{x_i,y_i \ | \ x_i \in X_i, y_i
	\in Y_i, y \in \Sscr^N(x)\}.} 
\end{align*}
But $y_j \in Y_j, x_j \in X_j$ for $j \neq i$, implying that 
\begin{align*}
\{x_i,y_i \ | \ x_i \in X_i, y_i
\in Y_i, y \in \Sscr^N(x) \}
& =\{x_i,y_i \ | \ x_i \in X_i, {y_j
	\in Y_j \mbox{ for } j = 1, \hdots, N}, y \in \Sscr^N(x)\} \\
			& = \{x_i,y_i \ | \ x \in X, y \in Y, (x,y) \in \Gscr\}, 
\end{align*}
where $\Gscr$ is defined in \eqref{eq:sscrn}.
Thus $\Omega^{\ae}$ is a shared constraint of the form dictated by \eqref{eq:shared}. 
		\item[(ii)] {It suffices to show that $\Fscr = \Fscr^{\ae}$.
			But, from (i) it follows that $\Fscr^{\ae} =   (X\times
					Y)\cap \Gscr$. It is easy to see from the definition of $\Fscr$ that 
				$ \Fscr=(X\times Y)\cap \Gscr$. The result follows.} 
		\item[(iii)] An equilibrium $(x,y)$ of $\Epec$ lies in $\Fscr$ and thereby in $\Fscr^{\ae}$. Since $\Omega^{\ae}_i(x^{-i},y^{-i}) \subseteq \Omega_i(x^{-i},y^{-i})$, 
the result follows. 
\item[(iv)] The relation $ y \in \Sscr^N(x) $ is equivalent to the set of equations
\[ \fnat(y_i;x) =0, \quad \forall i \in \Nscr\]
where $ \fnat(\cdot;x) $ is the natural map~\cite{facchinei03finiteI} of $ \VI(G(x,\cdot),K(x))$. Since $ K,G $ have been assumed continuous (cf. immediately following \eqref{eq:sscr}), it follows that the zeros of $ \fnat $ form a closed set.
	\end{enumerate}
\end{proof}

A special case of the conventional game $ \Epec $ which is already a shared-constraint game is the case (denoted by $\Epec^{\rm bl}$) where leaders have disjoint set of followers. Effectively, each leader solves bilevel optimization problems as follows.
$$
	\problem{L$^{\bl}_i(x^{-i})$}
	{x_i,y_i}
	{\varphi_i(x_i,y_i;x^{-i})}
				 {\begin{array}{r@{\ }c@{\ }l}
		x_i &\in& X_i, \\
		y_i &\in& \widehat{\Sscr}_i(x_i),\\
		y_i &\in& Y_i. 
	\end{array}}
	$$
{Since $y_i \in \widehat{\Sscr}_i(x_i)$, there is no coupling of
	leader decisions in the constraints of leader problems}. This is a
	special case of $\Epec$ with $\Sscr(x) \equiv \prod_{i\in \Nscr}
	\widehat{\Sscr}_i(x_i)$, where $\widehat{\Sscr}_i(x_i)$ 	is the
	solution of a variational inequality for each $i$
	and where the objective of leader $ i $ depends only on the equilibrium of $ \widehat{\Sscr}_i(x_i)$ and not on 
	$ \widehat{\Sscr}_j(x_j) $ for $ j \neq i. $ With a slight abuse of our notation 	so far, we let $ y_i $ denote an element of the set $ \widehat{\Sscr}_i(x_i) $ and $ Y_i $ be the space of such $ y_i. $ 	Let
	$\Omega^{\bl}_i$ be the feasible region of L$^{\bl}_i(x\mi,y\mi)$
	and let $\Fscr^{\bl}$ be the set of fixed points of $\Omega^{\bl}
	\triangleq \prod_{i=1}^N \Omega_i^{\bl}$. Since there is no
	coupling, it is easily seen that 
$$\Fscr^{\bl} = \Omega^{\bl} = \{ (x,y) \ | \ x \in X, y \in \tilde Y, (x,y) \in \widehat{\Gscr}\}, $$
where $\widehat{\Gscr} = \prod_{i=1}^N \widehat{\Gscr}_i$, $\tilde Y
\triangleq \prod_{i=1}^N Y_i$, and $\widehat{\Gscr}_i$ is the graph of $\widehat{\Sscr}_i$. 
If $(x_j,y_j) \in \Omega^{\bl}_j$, for $j \neq i$, 
$$\Omega_i^{\bl} = \{ (x_i,y_i) \ | \ {x_i \in X_i, y_i \in Y_i ,y_i
	\in \widehat{\Sscr}_i(x_i)}\} = \{ (x_i,y_i)\ | \ (x,y) \in \Fscr^{\bl}\}. $$
It follows that this game is a shared constraint game.

%

%

\subsection{Existence of global equilibria}\label{sec:33}
We now present existence results for the games $ \Epec^{\ae} $, but since the only property we use is the 
shared constraint structure, our results apply also to $ \Epec^{\bl}. $
We emphasize that  in our modified formulation $ \Epec^{\ae} $, the
optimization   problem of each leader is indeed constrained by an  
equilibrium constraint -- and it is thus a hard nonconvex problem in its own right.

Our main result relates the global minimizers of the following optimization problem to the equilibria of $ \Epec^{\ae}.$
 $$\problem{P$^{\rm ae}$}
	{x,y}
	{\pi(x,y)}
				 {\begin{array}{r@{\ }c@{\ }l}
(x,y) \in \FscrS.
	\end{array}}
	$$
\begin{theorem}[Minimizers of $\PS$ and Equilibria of $\EpecS$] \label{thm:pot}
Let $\EpecS$ be a  potential multi-leader multi-follower game with a
potential function $\pi$.  Then any global minimizer of $\pi$ over $\FscrS$ is an equilibrium of $\EpecS$. 
\end{theorem}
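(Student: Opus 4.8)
The plan is to mimic the argument used for Proposition~\ref{unshared-relation}, exploiting the shared-constraint structure of $\Epec^{\ae}$ together with the potential property. Let $(x,y)$ be a global minimizer of $\PS$, so that $(x,y) \in \FscrS$ and $\pi(x,y) \le \pi(x',y')$ for every $(x',y') \in \FscrS$. Fixing an arbitrary leader $i \in \Nscr$, I want to show that $(x_i,y_i)$ solves ${\rm L}^{\ae}_i(x^{-i},y^{-i})$; since $i$ is arbitrary, this will establish that $(x,y)$ is an equilibrium of $\EpecS$.

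The key step is to convert a unilateral deviation of leader $i$ into a feasible point of $\FscrS$ at which the global minimality of $\pi$ can be invoked. Take any $(\bar{x}_i,\bar{y}_i) \in \Omega_i^{\ae}(x^{-i},y^{-i})$ and form the tuple $(x',y') := (\bar{x}_i,x^{-i},\bar{y}_i,y^{-i})$ obtained by replacing only leader $i$'s components. Here is where the shared-constraint property enters: by Proposition~\ref{eq-orig-ae}(i) the map $\Omega^{\ae}$ satisfies \eqref{eq:shared}, which, read componentwise, is precisely the statement that $(\bar{x}_i,\bar{y}_i) \in \Omega_i^{\ae}(x^{-i},y^{-i})$ if and only if $(\bar{x}_i,x^{-i},\bar{y}_i,y^{-i}) \in \Fscr^{\ae} = \FscrS$. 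Hence $(x',y') \in \FscrS$, and global minimality yields $\pi(x_i,y_i;x^{-i},y^{-i}) \le \pi(\bar{x}_i,\bar{y}_i;x^{-i},y^{-i})$.

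It then remains to transfer this inequality on $\pi$ to the leader's own objective $\varphi_i$. This is immediate from the defining identity \eqref{eq:pot} of a potential game: with the off-$i$ coordinates $(x^{-i},y^{-i})$ held fixed, the difference $\varphi_i(x_i,y_i;x^{-i},y^{-i}) - \varphi_i(\bar{x}_i,\bar{y}_i;x^{-i},y^{-i})$ equals the corresponding difference of $\pi$, which I have just shown to be nonpositive. Therefore $\varphi_i(x_i,y_i;x^{-i}) \le \varphi_i(\bar{x}_i,\bar{y}_i;x^{-i})$ for every $(\bar{x}_i,\bar{y}_i) \in \Omega_i^{\ae}(x^{-i},y^{-i})$, so $(x_i,y_i)$ is a best response, and the conclusion follows since $i$ was arbitrary.

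I expect the only delicate point to be the equivalence in the second paragraph, namely that changing a single leader's coordinates keeps the tuple inside $\FscrS$. This fails for the unmodified game $\Epec$ (whose feasible-region map $\Omega$ need not satisfy \eqref{eq:shared}), and it is exactly what the passage to $\Epec^{\ae}$ buys us; everything else is a routine relabeling of coordinates combined with the potential identity. I note that, in contrast with the implicit-potential setting, this statement requires neither single-valuedness of $\Sscr$ nor continuity of the $\varphi_i$; continuity and coercivity only reappear later when one wishes to guarantee that a minimizer of $\PS$ actually exists.
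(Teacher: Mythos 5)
Your proposal is correct and takes essentially the same approach as the paper's own proof: both invoke global minimality of $\pi$ over $\FscrS$, then use the shared-constraint equivalence $(u_i,v_i)\in\OmegaS_i(x\mi,y\mi)\iff(u_i,x\mi,v_i,y\mi)\in\FscrS$ (which is exactly what the passage to $\Epec^{\ae}$ provides, via Proposition~\ref{eq-orig-ae}(i)), and finally the potential identity \eqref{eq:pot} to transfer the resulting inequality on $\pi$ to each $\varphi_i$. Your closing observations --- that this equivalence is precisely what fails for the unmodified $\Epec$, and that continuity/coercivity are needed only for the later existence claim (Theorem~\ref{thm:pot2}) --- are also consistent with the paper.
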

\begin{proof}
Let $(x,y) \in \FscrS$ be a global minimum of $\pi$ over $\FscrS$. Then,  for each $i \in \Nscr$
\begin{align*}
\pi(x_i,y_i,x\mi,y\mi) - \pi(u_i,v_i,x\mi,y\mi) &\leq 0 \quad \forall \ (u_i,v_i) : (u_i,v_i,x\mi,y\mi) \in \FscrS.  
\end{align*}
But, $(u_i,v_i,x\mi,y\mi) \in \FscrS$ if and only if $(u_i,v_i) \in \OmegaS_i(x\mi,y\mi),$ since $\OmegaS$ is a shared constraint. Using this, together 
with the fact that $\pi$ is a potential function, we obtain that for
	each $i$
\[\varphi_i(x_i,y_i;x\mi,y\mi) - \varphi_i(u_i,v_i;x\mi,y\mi) \leq 0 \quad\forall \ (u_i,v_i) \in \OmegaS_i(x\mi,y\mi).\]
This implies that for $i = 1, \hdots, N$, given $(x^{-i},y^{-i})$,
	the vector  $(x_i,y_i)$ lies in the set of best responses for leader
		$i$. In other words, $(x,y)$ is an
		equilibrium of $\EpecS$. 
\end{proof}

It now follows that if the minimizer of $\PS$ exists, the game $\EpecS$ admits an equilibrium. 
\begin{theorem}[Existence of equilibria of $\EpecS$] \label{thm:pot2}
Let $\EpecS$ be a  potential multi-leader multi-follower game with a
potential function $\pi$. Suppose $\FscrS$ is a nonempty set and $\varphi_i(x)$ is a continuous function for $i = 1, \hdots, N$. If the minimizer of $\PS$ exists
(for example, if either $\pi$ is a coercive function on $\FscrS$ or if
 $\FscrS$ is compact), then $\EpecS$ admits  an equilibrium. 
\end{theorem}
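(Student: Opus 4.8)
The plan is to read this statement as a direct corollary of Theorem~\ref{thm:pot}. That theorem already establishes that any global minimizer of $\PS$ -- equivalently, any global minimizer of $\pi$ over $\FscrS$ -- is an equilibrium of $\EpecS$. Hence the entire burden of the present statement is to certify that such a global minimizer exists; once it does, Theorem~\ref{thm:pot} immediately delivers the equilibrium. So the proof reduces to a standard Weierstrass/coercivity argument, and the only structural ingredients I need are that $\FscrS$ is closed (already noted just before the statement, arguing as for $\Fscr^{\rm quasi}$ via the zero set of the natural map $\fnat$) and that the objective $\pi$ is continuous.

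First I would pin down the continuity of $\pi$ from the assumed continuity of the $\varphi_i$. Since differentiability is not assumed here, I cannot invoke \eqref{eq:potdiff} or the Jacobian criterion of Proposition~\ref{lem:jac}; instead I would use the defining relation \eqref{eq:pot} directly. Fixing a reference point $(x^0,y^0) \in X \times Y$, I would write, for arbitrary $(x,y)$, the difference $\pi(x,y) - \pi(x^0,y^0)$ as a telescoping sum in which the blocks $(x_i,y_i)$ are switched from $(x_i^0,y_i^0)$ to $(x_i,y_i)$ one leader at a time. By \eqref{eq:pot}, each successive increment is exactly a difference of values of the corresponding $\varphi_i$ (changing only the $i\th$ block), evaluated at intermediate arguments that depend continuously on $(x,y)$; hence each increment, and therefore the whole sum over $i \in \Nscr$, is continuous. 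Thus $\pi$ differs from a continuous function by the constant $\pi(x^0,y^0)$ and so is continuous on $X \times Y$, in particular on $\FscrS$.

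With $\pi$ continuous and $\FscrS$ nonempty and closed, existence of a global minimizer follows in each stated regime. If $\FscrS$ is compact, the minimizer exists at once by the Weierstrass extreme value theorem. If instead $\pi$ is coercive on $\FscrS$ (so that $\pi(z) \to \infty$ as $\|z\| \to \infty$ with $z \in \FscrS$), I would fix any $z^0 \in \FscrS$ and pass to the sublevel set $L \triangleq \{z \in \FscrS : \pi(z) \leq \pi(z^0)\}$. Coercivity makes $L$ bounded, while continuity of $\pi$ together with closedness of $\FscrS$ makes $L$ closed; hence $L$ is compact and nonempty, and minimizing the continuous $\pi$ over $L$ yields a point that is in fact a global minimizer of $\pi$ over all of $\FscrS$. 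Feeding this minimizer into Theorem~\ref{thm:pot} produces an equilibrium of $\EpecS$.

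The conceptual content was discharged in Theorem~\ref{thm:pot}, so I do not anticipate a genuine obstacle. The one point needing care is the continuity of $\pi$, since the hypotheses supply only continuity (not differentiability) of the $\varphi_i$, forcing the telescoping construction rather than the Jacobian argument; and in the coercive case one must lean on the already-established closedness of $\FscrS$ to upgrade the bounded sublevel set $L$ to a compact one.
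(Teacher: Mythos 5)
Your proposal is correct and follows the same route as the paper's proof: the paper likewise observes that $\pi$ is continuous as a consequence of \eqref{eq:pot}, invokes coercivity or compactness of $\FscrS$ to secure a global minimizer of $\PS$, and then cites Theorem~\ref{thm:pot} to conclude. Your write-up merely fills in the details the paper leaves implicit (the telescoping argument for continuity of $\pi$ and the sublevel-set/Weierstrass argument using closedness of $\FscrS$), which is exactly the intended reasoning.
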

\begin{proof}
 It is easy to see from \eqref{eq:pot} that $\pi$ is continuous. By
	 the hypothesis of the theorem, $\pi$ achieves its global minimum on
		 $\FscrS$. This could, for instance, be deduced from the
		 coercivity of $\pi$ over a nonempty set $\FscrS$ or {by
			 the} compactness of $\FscrS$ {Based on
				 Theorem~\ref{thm:pot}},  a global minimizer of $\pi$ is
				 an equilibrium of $\EpecS$ and the result follows. 
 \end{proof}

\noindent \textbf{Remark:\ }Recall that $ \Fscr^{\ae} =\Fscr.$ Therefore $ \PS $ is essentially a minimization of $ \pi $ over $ \Fscr. $ 
Furthermore, since the closedness of $ \Fscr $ is already established, compactness follows from the boundedness of $ X \times Y. $ \hfill $\square$

If the objectives of the leaders are independent of the strategies of
	other leaders, the sum of the objectives is a potential function, whereby any such game is a potential game. We thus have the following corollary.
\begin{corollary} \label{thm:new}
Consider a multi-leader multi-follower game $\EpecS$ for which 
$\FscrS$ is nonempty and $\varphi_i, i\in \Nscr$ are continuous. 
 Assume further that for each $i \in \Nscr$, $\varphi_i(x_i,y_i;x^{-i}) \equiv
\varphi_i(x_i,y_i)$, \ie, assume that $\varphi_i$ is independent of
$x^{-i}$. If, either the functions $\varphi_i, i \in \Nscr$ are coercive or if $\FscrS$ is compact, 
the game $\EpecS$ has an equilibrium.
\end{corollary}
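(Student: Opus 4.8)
The plan is to reduce the corollary to Theorem~\ref{thm:pot2}, which already delivers an equilibrium of $\EpecS$ as soon as $\EpecS$ is a potential game, $\FscrS$ is nonempty with continuous $\varphi_i$, and a global minimizer of $\PS$ exists. Nonemptiness and continuity are assumed outright, so the two things I must supply are (a) a potential function, using the independence of each $\varphi_i$ from $x\mi$, and (b) the existence of a minimizer of $\PS$ under either hypothesis. Step (a) is the heart of the matter and step (b) is then short.

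For (a) I would take the natural candidate
\[ \pi(x,y)\;\triangleq\;\sum_{j=1}^N \varphi_j(x_j,y_j), \]
which is well defined precisely because the hypothesis $\varphi_i(x_i,y_i;x\mi)\equiv\varphi_i(x_i,y_i)$ makes every summand a function of its own block only. To check the defining identity \eqref{eq:pot}, fix $i\in\Nscr$ and perturb only the $i\th$ block from $(x_i,y_i)$ to $(x_i',y_i')$; since each $\varphi_j$ with $j\neq i$ depends solely on $(x_j,y_j)$, those summands are untouched and cancel in the difference $\pi(x_i,y_i;x\mi,y\mi)-\pi(x_i',y_i';x\mi,y\mi)$, which therefore collapses to $\varphi_i(x_i,y_i)-\varphi_i(x_i',y_i')$. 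This is exactly \eqref{eq:pot}, so $\EpecS$ is a potential game with potential $\pi$; this is the precise content of the remark preceding the statement.

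For (b) I would treat the two hypotheses separately. If $\FscrS$ is compact, then $\pi$ (continuous, being a finite sum of continuous $\varphi_i$) attains its minimum on $\FscrS$ by Weierstrass, so the minimizer of $\PS$ exists and Theorem~\ref{thm:pot2} applies. If instead each $\varphi_i$ is coercive, I would argue that $\pi$ inherits coercivity: a continuous coercive function is bounded below, so writing $m_j=\inf\varphi_j>-\infty$ and selecting, at any point $(x,y)$, the block $k$ of largest norm, one has $\pi(x,y)\ge\varphi_k(x_k,y_k)+\sum_{j\neq k}m_j$; as $\|(x,y)\|\to\infty$ the norm of the largest block also tends to infinity, driving $\varphi_k(x_k,y_k)\to+\infty$ while the remaining sum stays bounded, so $\pi(x,y)\to+\infty$. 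Coercivity of $\pi$ on the whole space (hence on $\FscrS$) again furnishes a minimizer of $\PS$, and Theorem~\ref{thm:pot2} concludes.

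The only step requiring any care is the coercivity of the sum, and even there the obstacle is slight: the one fact that must be invoked is that a continuous coercive summand is bounded below, which forbids a growing block from being cancelled by another block diverging to $-\infty$. All the remaining work is the block-isolation bookkeeping in step (a) together with a direct appeal to Theorem~\ref{thm:pot2}.
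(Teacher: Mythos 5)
Your proof is correct and follows essentially the same route as the paper's: the paper's entire proof consists of observing that $\pi = \sum_{i\in\Nscr}\varphi_i$ is a potential function (because each $\varphi_i$ is independent of $x\mi$) and then invoking the minimizer-to-equilibrium result (Theorem~\ref{thm:pot}, with solvability as in Theorem~\ref{thm:pot2}). The only difference is that you spell out what the paper leaves implicit --- the block-cancellation check of \eqref{eq:pot}, Weierstrass on the compact $\FscrS$, and the inheritance of coercivity by the sum via boundedness below of each continuous coercive summand --- and all of these filled-in details are handled correctly.
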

\begin{proof}
If $\varphi_i(x_i,y_i;x^{-i}) \equiv \varphi_i(x_i,y_i)$ for each $i$,
   $\pi = \sum_{i\in \Nscr}\varphi_i$ is a potential function. Then by
   Theorem \ref{thm:pot}, the game has an equilibrium. 
   \end{proof}


	\subsection{Existence of Nash stationary equilibria}\label{sec:34}
Since $ \Fscr^{\ae} = \Fscr $ is characterized by equilibrium constraints, $ \PS $ is an MPEC.
	In this section,  we relate stationary
		points and local minimizers $ \PS $ to {their equilibrium counterparts in the context of}
		$\EpecS$. 
		These relations assume relevance because, being an MPEC, the global minimization of $\pi$ over $\FscrS$ is
	hindered by the nonconvexity of $\FscrS$ as well as the possible
	nonconvexity of $\pi$. When solved computationally, standard nonlinear
	programming solvers may only produce a suitably defined stationary
	point of ${\rm P}^{\ae}$. Traditionally, while a range of
	stationarity points are considered in the context of mathematical programs with equilibrium constraints~\cite{scheel00mathematical}, we
	focus on the notions Bouligand stationarity, local
		minima, strong stationarity and second-order strong stationarity. The proofs of these results are quite similar to those from our recent submission~\cite{kulkarni13existence}; we therefore provide only a sketch of each proof. 	
\subsubsection{{B-stationary} equilibria} \label{sec:local}

	\begin{definition}[Nash B-stationary point]
	A point $(x,y) \in \FscrS$ is a {\em Nash B-stationary point} of $\EpecS$ if for all $i \in \Nscr$,
	\[\nabla_i\varphi_i(x,y)\t d \geq 0 \quad \forall d \in \Tscr((x_i,y_i);\OmegaS_i(x\mi)), \]
	where $\Tscr(z;K)$, the tangent cone at $z \in K \subseteq
			\mathbb{R}^n$, is defined as follows:
			$$ \Tscr(z;K) \triangleq \left\{ dz \in \Real^n: \exists
			\{\tau_k\}, \{z_k\} \mbox{ such that }  dz = \lim_{k \to \infty}
			\left(\frac{z_k - z}{\tau_k}\right),  K \ni 
			z_k \to z, 0 < \tau_k \to 0  \right\}. $$
	\end{definition}	
	
	\begin{proposition} [B-Stationary points of P$^{\ae}$ and
	Nash B-stationary points of $\EpecS$]
Let $\EpecS$ be a potential \mlmfg with potential function $ \pi $ and suppose $\{\varphi_i\}_{i \in
		\Nscr}$ are
	continuously differentiable functions over $X \times Y$. If $(x,y)$ is a B-stationary point of
	$\PS$, then $(x,y)$ is a Nash  B-stationary point of $\EpecS$.
	\label{prop:statae}
	\end{proposition}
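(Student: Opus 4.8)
The plan is to mirror the argument of Proposition~\ref{prop:stat}, using the shared-constraint identity of $\OmegaS$ established in Proposition~\ref{eq-orig-ae}(i) in place of the product structure exploited there. I would begin by recording the B-stationarity condition for $\PS$ at the point $(x,y)$, namely
\[
\nabla\pi(x,y)\t d \geq 0 \qquad \forall\ d \in \Tscr((x,y);\FscrS).
\]
The objective is to deduce that for every fixed $i \in \Nscr$ the per-player inequality $\nabla_i\varphi_i(x,y)\t d_i \geq 0$ holds for all $d_i \in \Tscr((x_i,y_i);\OmegaS_i(x\mi,y\mi))$, which is exactly the defining property of a Nash B-stationary point of $\EpecS$.

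Next I would fix $i \in \Nscr$ and take an arbitrary tangent direction $d_i = (dx_i,dy_i) \in \Tscr((x_i,y_i);\OmegaS_i(x\mi,y\mi))$. By the sequential definition of the tangent cone there exist sequences $\OmegaS_i(x\mi,y\mi) \ni (u_{i,k},v_{i,k}) \buildrel{k}\over\rightarrow (x_i,y_i)$ and $0 < \tau_k \buildrel{k}\over\rightarrow 0$ with $\frac{(u_{i,k},v_{i,k}) - (x_i,y_i)}{\tau_k} \buildrel{k}\over\rightarrow (dx_i,dy_i)$. The key step is the lifting to the full space: since $\OmegaS$ satisfies \eqref{eq:shared}, membership $(u_{i,k},v_{i,k}) \in \OmegaS_i(x\mi,y\mi)$ is equivalent to $(u_{i,k},x\mi,v_{i,k},y\mi) \in \FscrS$. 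Hence the tuple obtained by replacing only the $i$th block of $(x,y)$ by $(u_{i,k},v_{i,k})$ lies in $\FscrS$, converges to $(x,y)$, and has difference quotients converging to the direction $d$ that equals $(dx_i,dy_i)$ in the $i$th block and is zero in all other blocks. By definition of the tangent cone this shows $d \in \Tscr((x,y);\FscrS)$.

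Finally I would substitute this lifted $d$ into the B-stationarity inequality for $\PS$. Because $d$ vanishes outside the $i$th block, $\nabla\pi(x,y)\t d = \nabla_i\pi(x,y)\t (dx_i,dy_i)$, and invoking the potential-function characterization \eqref{eq:potdiff}, $\nabla_i\pi \equiv \nabla_i\varphi_i$, yields $\nabla_i\varphi_i(x,y)\t (dx_i,dy_i) \geq 0$. Since $i \in \Nscr$ and the direction $d_i$ were arbitrary, $(x,y)$ is a Nash B-stationary point of $\EpecS$.

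The only genuine subtlety I anticipate is the lifting step: verifying that a direction supported on a single player's block, approximated from within that player's feasible slice $\OmegaS_i(x\mi,y\mi)$, really belongs to the tangent cone of the whole shared-constraint set $\FscrS$. In general tangent cones do not behave well under such restrictions, but the shared-constraint equivalence \eqref{eq:shared} makes the lifted sequence feasible for $\FscrS$ by construction, so the required inclusion follows directly from the sequential definition of the tangent cone rather than from any calculus of cones. Everything else is a routine substitution plus the potential identity.
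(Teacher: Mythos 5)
Your proposal is correct and follows essentially the same route as the paper's own proof: both fix a player $i$, use the sequential definition of the tangent cone to lift a direction from $\Tscr((x_i,y_i);\OmegaS_i(x\mi,y\mi))$ to a block-supported direction in $\Tscr((x,y);\FscrS)$ by replacing only the $i$th block of the feasible sequence, and then substitute that lifted direction into the stationarity inequality and invoke $\nabla_i\pi \equiv \nabla_i\varphi_i$. The only difference is cosmetic: you cite the shared-constraint identity \eqref{eq:shared} explicitly to justify feasibility of the lifted sequence in $\FscrS$, where the paper leaves that justification implicit.
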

	\begin{proof}
A stationary point $(x,y)$ of $\pi$ over $\FscrS$ satisfies 
	\begin{align} \label{stat-pi}
		\nabla_{x}  \pi(x,y)\t d{x}  + \nabla_{y}
			\pi(x,y)\t  d{y} \geq 0, \quad \forall (d{x},d{y}) \in
			\Tscr ((x,y);\FscrS).
		\end{align}
		Fix some $i\in \Nscr$ and consider an arbitrary $(dx_i',dy_i') \in
	\Tscr(x_i,y_i;\OmegaS_i(x^{-i},y^{-i})).$ Let $( u_{i,k},v_{i,k}) \in \OmegaS(x^{-i}, y^{-i})$,  $ ( u_{i,k},v_{i,k}) \buildrel{k}\over\rightarrow (x_i,y_i)$ and $0<\tau_k \buildrel{k}\over\rightarrow0$ such that $\frac{u_{i,k} -x_i}{\tau_k} \buildrel{k}\over\rightarrow dx_i'$ and $ \frac{v_{i,k}-y_i}{\tau_k}\buildrel{k}\over\rightarrow dy_i'$. It follows that the sequence 
$(\textbf{x}_{i,k},\textbf{y}_{i,k}) \in \FscrS$,
		where 
		\begin{align}\label{def-zi2}
			\textbf{x}_{i,k} = (x_1, \hdots, u_{i,k}, \hdots, x_N), \aur  
			\textbf{y}_{i,k} = (y_1, \hdots, v_{i,k}, \hdots, y_N). 
		\end{align}
 Therefore, the direction $(\textbf{dx}_i,\textbf{dy}_i) \in \Tscr(z;\FscrS)$ where 
		$$ \textbf{dx}_i = (0, \hdots, dx_i', \hdots, 0) \mbox{ and } \textbf{dy}_i =
		(0, \hdots, dy_i', \hdots, 0). $$
Taking $(dx,dy) = (\textbf{dx}_i,\textbf{dy}_i)$ in \eqref{stat-pi} and using \eqref{eq:potdiff} we get 
the required result.
	\end{proof}

\begin{proposition}\textbf{\em (Local minimum of $\PS$ and local Nash equilibrium)}
Consider the \mlmfg $\EpecS$ with potential function $\pi$. If $(x,y)$ is a local minimum of $\PS$, 
then $(x,y)$ is a local Nash equilibrium of $\EpecS$.
\end{proposition}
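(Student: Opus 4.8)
The plan is to follow the template of the proof of Proposition~\ref{prop:local}, replacing its quasi-potential ingredients with the shared-constraint structure of $\OmegaS$ established in Proposition~\ref{eq-orig-ae}(i) together with the defining identity \eqref{eq:pot} of the potential function $\pi$. By analogy with Definition~\ref{def:local}, a point $(x,y) \in \FscrS$ is a local Nash equilibrium of $\EpecS$ precisely when $(x_i,y_i)$ is a local minimizer of ${\rm L}^{\ae}_i(x\mi,y\mi)$ for every $i \in \Nscr$, so this is the per-player statement I must establish.

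First I would use that $(x,y)$ is a local minimum of $\PS$ to obtain a neighborhood $\Bscr(x,y)$ of $(x,y)$ with
\[ \pi(x,y) \leq \pi(x',y') \qquad \forall\ (x',y') \in \Bscr(x,y) \cap \FscrS. \]
Next, fixing an arbitrary $i \in \Nscr$, I would pass to the slice neighborhood $\Bscr_i(x_i,y_i;x\mi,y\mi) := \{(u_i,v_i) \mid (u_i,x\mi,v_i,y\mi) \in \Bscr(x,y)\}$, which is a neighborhood of $(x_i,y_i)$. The key step is the equivalence, valid because $\OmegaS$ is a shared constraint satisfying \eqref{eq:shared},
\[ (u_i,v_i) \in \OmegaS_i(x\mi,y\mi) \cap \Bscr_i(x_i,y_i;x\mi,y\mi) \iff (u_i,x\mi,v_i,y\mi) \in \FscrS \cap \Bscr(x,y), \]
where the matching of intersections on the two sides is immediate from the definition of $\Bscr_i$, while it is the shared-constraint property that permits replacing only the $i$th block $(x_i,y_i)$ by $(u_i,v_i)$, with $(x\mi,y\mi)$ held fixed, and remaining in $\FscrS$.

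I would then substitute $x' = (u_i,x\mi)$ and $y' = (v_i,y\mi)$ into the local minimality inequality to obtain $\pi(x_i,y_i;x\mi,y\mi) \leq \pi(u_i,v_i;x\mi,y\mi)$ for all $(u_i,v_i) \in \OmegaS_i(x\mi,y\mi) \cap \Bscr_i(x_i,y_i;x\mi,y\mi)$. Finally, invoking the potential identity \eqref{eq:pot}, namely $\pi(x_i,y_i;x\mi,y\mi) - \pi(u_i,v_i;x\mi,y\mi) = \varphi_i(x_i,y_i;x\mi,y\mi) - \varphi_i(u_i,v_i;x\mi,y\mi)$, I would conclude $\varphi_i(x_i,y_i;x\mi,y\mi) \leq \varphi_i(u_i,v_i;x\mi,y\mi)$ throughout this neighborhood, i.e.\ $(x_i,y_i)$ is a local minimizer of ${\rm L}^{\ae}_i(x\mi,y\mi)$. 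Since $i \in \Nscr$ was arbitrary, $(x,y)$ is a local Nash equilibrium of $\EpecS$.

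The main obstacle is the bookkeeping around the neighborhoods: one must verify that restricting the global neighborhood $\Bscr(x,y)$ to variations of player $i$'s block alone yields a genuine neighborhood $\Bscr_i$ of $(x_i,y_i)$ and that the two intersections coincide exactly, so that local minimality of $\pi$ over $\FscrS$ transfers to local minimality of $\varphi_i$ over $\OmegaS_i$. This is exactly where \eqref{eq:shared} is indispensable — without the shared-constraint structure one could not, even locally, perturb $(x_i,y_i)$ while keeping $(x\mi,y\mi)$ fixed and staying feasible, and the reduction to the $i$th player's problem would fail. Everything else is a direct local, neighborhood-restricted transcription of the global argument in Theorem~\ref{thm:pot}.
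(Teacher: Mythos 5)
Your proof is correct and takes essentially the same route as the paper's own argument: the same slice neighborhood $\Bscr_i(x_i,y_i;x\mi,y\mi)$, the same equivalence between $\OmegaS_i(x\mi,y\mi) \cap \Bscr_i$ and $\FscrS \cap \Bscr(x,y)$, and the same final appeal to \eqref{eq:pot} to pass from $\pi$ to $\varphi_i$. The only (welcome) difference is that you justify the equivalence explicitly via the shared-constraint property of Proposition~\ref{eq-orig-ae}(i), a point the paper leaves implicit.
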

\begin{proof}
The proof is analogous to that of Theorem \ref{thm:pot}.  If $(x,y)$ is
a local minimum of $\PS$, there exists a neighborhood of $(x,y)$, denoted
by $\Bscr(x,y)$, such that 
\begin{equation}
\pi(x,y) \leq \pi(x',y'), \qquad \forall\ (x',y') \in \Bscr(x,y) \cap \FscrS. \label{eq:loc}
\end{equation}
Consider an arbitrary $i \in \Nscr$ and let $\Bscr_i(x_i,y_i;x\mi,y\mi)
:= \left\{(u_i,v_i)\ |  (u_i,v_i,x\mi,y\mi) \in \Bscr(x,y)\right\}$.  Then it follows that
$$(u_i,v_i) \in \left(\OmegaS_i(x\mi,y\mi) \cap
		\Bscr_i(x_i,y_i;x\mi,y\mi)\right)
 \iff (u_i,v_i,x\mi,y\mi) \in \left(\FscrS \cap \Bscr(x,y)\right).$$ Thus, using this relation in \eqref{eq:loc} and employing 
\eqref{eq:pot}, we get 
\[\varphi_i(x,y) \leq \varphi_i(u_i,v_i,x\mi,y\mi), \qquad \forall\
		(u_i,v_i) \in \left(\OmegaS_i(x\mi,y\mi) \cap
				\Bscr_i(x_i,y_i;x\mi,y\mi)\right).\]
In other words, $(x_i,y_i)$ is a local minimizer of $\LiS(x\mi,y\mi)$. This holds for each $i \in \Nscr,$ whereby $(x,y)$ is a local Nash equilibrium. 
\end{proof}
\subsubsection{Strong stationarity}
Here we relate other notions of stationarity for P$^{\ae}$ with weaker equilibrium notions of $ \Epec^{\ae}. $ When the algebraic form of the constraints are available, a \textit{strong-stationary point} can be
	defined. Let $X_i = \{x_i | c_i(x_i) \geq 0\},Y_i = \{y_i | d_i(y_i)
	\geq 0\}$, where $c_i,d_i$ are continuously differentiable. Let
	$\Sscr(x)$ be the solution of a complementarity problem: $y_i
	\in \Sscr(x) \iff 0 \leq y_i \perp G(y_i;x) \geq 0$, and $G$ is $\Real^p$-valued and continuously differentiable. Thus $\PS$ can
	be written as
	$$\problem{$\PS$}
	{x,y}
	{\pi(x,y)}
	{\begin{array}{r@{\ }c@{\ }l}
		\left\{ \begin{aligned}
			c_i(x_i) & \geq  0 \\
			d_i(y_i) & \geq  0 \\
		0 \leq y_i & \perp  G(y_i,x) \geq 0 
				\end{aligned}\right\}\quad i =1,\hdots, N.
	\end{array}}	
$$
To define the stationarity  conditions, we  define the relaxed nonlinear
	program below which requires specifying the index sets 	{$\tilde
	\Iscr_{1i}$ and $\tilde \Iscr_{2i}$ for $i=1,
	\hdots, N$ where $\tilde \Iscr_{1i}, \tilde \Iscr_{2i} \subseteq \{1, \hdots,
	p\}$ and $ \tilde\Iscr_{1i} \cup  \tilde\Iscr_{2i} = \{1, \hdots, p\}$.}
	$$\problem{$\PS_{rnlp}$}
	{x,y}
	{\pi(x,y)}
	{\begin{array}{r@{\ }c@{\ }l}
\left\{ \begin{aligned}
			c_i(x_i) & \geq  0 \\
			d_i(y_i) & \geq  0 \\
			{[y_i]}_j & =  0, \quad  \forall j  \in \tilde \Iscr_{2i}^{\perp} \\
			{[G(y_i,x)]}_j & =  0, \quad   \forall j \in  \tilde\Iscr_{1i}^{\perp}  \\
			{[y_i]}_j & \geq  0, \quad   \forall j \in  \tilde\Iscr_{1i}  \\
			{[G(y_i,x)]}_j & \geq  0, \quad \forall j \in  \tilde\Iscr_{2i}  
		\end{aligned}\right\}\quad i =1,\hdots, N,
	\end{array}}
	$$
where $[\cdot]_j$ denotes the $j\th$ component of `$\cdot$' and
{$ \tilde\Iscr_{1i}^\perp, \tilde\Iscr_{2i}^\perp$ denote the complements of
	$ \tilde\Iscr_{1i}, \tilde\Iscr_{2i}$ respectively.} {Further, we refer to the both
		index sets collectively as $\tilde \Iscr_i$ and  the collection
		of index sets $\{\tilde \Iscr_1, \hdots, \tilde \Iscr_N\}$ by
		$\tilde \Iscr$.} Note that
	in accordance with~\cite{fletcher02local}, we define the index sets
	independent of the point $(x,y)$. 
	 We may now state the strong stationarity conditions at a particular point $(x,y)$. 

	\begin{definition}[Strong-stationarity point of $\PS$]
A point $(x,y) \in \FscrS$
		is a  {\em strong stationarity point} of $\PS$ if there exist Lagrange
		multipliers $\eta_i, \mu_i, \lambda_i$ and $\beta_i, i \in \Nscr$ such that the following
		conditions hold:
		\begin{align}\label{ss-pae}
		\left\{
		\begin{aligned}
\nabla_{x_i} \pi(x,y) - \nabla_{x_i} c_i(x_i)\t \eta_i -
\sum_{k=1}^N\nabla_{x_i} G(y_k,x)\t\beta_k   &= 0 \\
\nabla_{y_i} \pi(x,y)  - \nabla_{y_i} d_i(y_i)\t \mu_i - \lambda_i -
\nabla_{y_i} G(y_i,x)\t\beta_i   & = 0 \\
0 \leq \eta_i & \perp  c_i(x_i)  \geq  0 \\
0 \leq \mu_i &\perp	d(y_i)  \geq  0 \\
			{y_i} & \geq  0, \\
{[\lambda_i]}_j {[y_i]}_j & = 0, \quad \forall j \\
			{G(y_i,x)} &  \geq  0, \\ 
{[\beta_i]}_j {[G(y_i,x)]}_j & = 0, \quad \forall j \\
{[y_i]}_j  = 0 \mbox{ or }   {[G(y_i,x)]}_j & {=0},
			\quad  \, \forall j  \\
		\mbox{ if } {[G(y_i,x)]}_j  = 0 \mbox{ and } {[y_i]}_j  = 0,
			\mbox{ then } [\lambda_i]_j, [\beta_i]_j & \geq 0, \quad
			\forall j
		\end{aligned}\right\}, \quad &\forall i \in \Nscr.
	\end{align}
	\end{definition}
	Having defined the strong stationarity conditions, we are now in a
	position to define the second-order sufficiency conditions. These
	assume relevance in defining a local Nash equilibrium; loosely
	speaking, at a local Nash equilibrium,  every agent's decision satisfies the \textbf{m}athematical \textbf{p}rograms with \textbf{e}quilibrium \textbf{c}onstraints-second-order
	sufficiency or the MPEC-SOSC conditions, given the decisions of its competitors. Furthermore, corresponding to a
	stationary point of $\PS_{rnlp}$, we may prescribe an active set
	$\tilde \Ascr(x,y)$ such that $\tilde \Ascr(x,y) \triangleq \{\tilde \Ascr_1(x,y), \hdots,
	\tilde \Ascr_N(x,y)\}$, where
	$\tilde \Ascr_i(x,y)$ denotes the set of active
	constraints corresponding to the set of constraints
	$$ \left\{ \begin{aligned}
			c_i(x_i) & \geq  0 \\
			d_i(y_i) & \geq  0 \\
			{[y_i]}_j & =  0, \quad  \forall j  \in \tilde \Iscr_{2i}^{\perp} \\
			{[G(y_i,x)]}_j & =  0, \quad   \forall j \in \tilde \Iscr_{1i}^{\perp}  \\
			{[y_i]}_j & \geq  0, \quad   \forall j \in \tilde \Iscr_{1i}  \\
			{[G(y_i,x)]}_j & \geq  0, \quad \forall j \in \tilde \Iscr_{2i}  
		\end{aligned}\right\}.$$
	Suppose $\tilde \Ascr_i(x,y) = \{\tilde
	\Ascr_i^c(x,y),\tilde 
	\Ascr_i^d(x,y), \tilde \Ascr_i^e(x,y)\}$, where $\tilde \Ascr_i^c,
	\tilde \Ascr_i^d$ and $\tilde \Ascr_i^e$ denote the active sets
	associated with $c_i(x_i) \geq 0$, $d_i(y_i) \geq 0$, and the
	remaining constraints, respectively. The specification of the active
	set allows us to define the 
	critical cone $S^*(x,y)$ as 
	\begin{align}\label{feas-dir} S^*(x,y) \triangleq \left\{s: s \neq 0, \nabla \pi(x,y)\t s = 0, a_j\t s =
	0, j \in \tilde \Ascr(x,y), a_j\t s \geq 0, j \not \in \tilde \Ascr(x,y)\right\},
	\end{align}
	where  $a_j$ denotes the constraint gradients of the $j\th$
		constraint.
	\begin{definition} [Second-order Strong-stationarity point of $\PS$]
		A point $(x,y)$ of the optimization problem $\PS$ is a {\em second-order strong stationary point} of $\PS$ if it is a strong
		stationary point with Lagrange 	multipliers $(\eta,\mu,\lambda,\beta)$ 
				and $s^T\nabla^2_{x,y} \Lscr\ s > 0$ for
		$s \in S^*(x,y)$, where $S^*(x,y)$ is given by \eqref{feas-dir} and
		$\nabla^2_{x,y}\Lscr$ denotes the Hessian of the Lagrangian
		of $\PS_{rnlp}$ with respect $(x,y)$ evaluated at 
				$(x,y,\eta,\mu,\lambda,\beta).$ 
	\end{definition}

 	Next, we provide a formal definition of
	\textit{Nash strong-stationary} and \textit{Nash second-order strong-stationary
	points} of $\EpecS$, which requires defining the
critical cone $S^*_i(x,y)$ for each leader $i = 1,\hdots, N$: 
\begin{align}\label{feas-dir-Li} S_i^*(x,y) \triangleq \left\{s_i: s_i
\neq 0, \nabla_{i} \varphi_i(x,y)\t s_i = 0, a_j\t s_i =
	0, j \in \Ascr_i(x,y), a_j\t s_i \geq 0, j \not \in \Ascr_i(x,y)\right\},
	\end{align}
where  {$\Ascr_i(x,y)$\footnote{{The active set associated with
	$\PS_{rnlp}$ is denoted by $\tilde \Ascr$ while the active set
		associated with leader $i$'s problems is denoted by $\tilde
		\Ascr_i$.}}  denotes the active set utilized in defining the
	relaxed nonlinear program associated with $\LiS(x\mi,y\mi)$}	and
	$a_j$ denotes the constraint gradient associated with $j\th$
	constraint.

	\begin{definition} [Nash strong-stationary and Nash second-order strong-stationary
	points] A point $(x,y) \in \FscrS$ is a Nash strong-stationary point of $\EpecS$ if for $i=1,
	\hdots, N$, 
		there exist Lagrange multipliers  $\bar{\eta}_i, \bar{\mu}_i,
			\bar{\lambda}_i$ and $\bar{\beta}_i^k, k =1,\hdots,N$, 
such that the following conditions hold:
		\begin{align}\label{ss-Li}
		\left\{
		\begin{aligned}
\nabla_{x_i} \varphi_i(x,y) - \nabla_{x_i} c_i(x_i)\t \bar{\eta}_i -
\sum_{k=1}^N\nabla_{x_i} G(y_k,x)\t\bar{\beta}_i^k   &= 0  \\
\nabla_{y_i} \varphi_i(x,y)  - \nabla_{y_i} d_i(y_i)\t \bar{\mu}_i - \bar{\lambda}_i - \nabla_{y_i} G(y_i,x)\t\bar{\beta}^i_i   &= 0 \\
0 \leq \bar{\eta}_i & \perp  c_i(x_i)  \geq  0 \\
0 \leq \bar{\mu}_i &\perp	d(y_i)  \geq  0 \\
			{y_i} & \geq  0, \\
{[\bar{\lambda}_i]}_j {[y_i]}_j & = 0, \quad \forall j \\
			{G(y_i,x)} &  \geq  0, \\ 
{[\bar{\beta}^k_i]}_j {[G(y_i,x)]}_j &=0, \quad \forall k \in \Nscr, \forall j  \\
			{[y_i]}_j  = 0 \mbox{ or }   {[G(y_i,x)]}_j & {=0},
			\quad  \forall j  \\
					\mbox{ if } {[G(y_i,x)]}_j  = 0 \mbox{ and } {[y_i]}_j  = 0,
			\mbox{ then } [\bar{\lambda}_i]_j, [\bar{\beta}^k_i]_j &\geq 0, \quad
\forall k \in \Nscr,			\forall j \\
		\end{aligned}\right\}, &\qquad \forall i \in \Nscr.
	\end{align}
	Furthermore, $(x,y)$ is a Nash second-order strong stationary point of
	$\EpecS$ if $(x,y)$ is a Nash strong stationary point of
	$\EpecS$  and if for $i = 1, \hdots, N$, $s_i^T \nabla^2_{x_i,y_i}
{\Lscr}_i(x,y) s_i > 0$ for $s_i \in S_i^*(x,y)$ where $S_i^*(x,y)$ is given by
\eqref{feas-dir-Li}, where $\nabla^2_{x_i,y_i} \Lscr_i$ denotes the
Hessian of the Lagrangian function of
$\LiS(x^{-i},y^{-i})$ with respect to $(x_i,y_i)$ evaluated at
$(x_i,y_i,\bar{\eta}_i,\bar{\mu}_i,\bar{\lambda}_i,\bar{\beta}^1_i,\hdots,
		\bar{\beta}^N_i)$ if $\EpecS=\Epec^{\ae}$ or at
$(x_i,y_i,\bar{\eta}_i,\bar{\mu}_i,\bar{\lambda}_i,\bar{\beta}_i)$ if
$\EpecS \in \{\Epec^{\ind},\Epec^{\bl}\}$ or at 
$(x_i,y_i,\bar{\eta}_i,\bar{\mu}_i,\bar{\lambda}_i,\bar{\gamma}_i,\bar{\beta}_i)$ if $\EpecS =
\Epec^{\cc}$.
	\end{definition}

	Having defined the relevant objects, we now show that a
	strong-stationary point of $\PS$ is a Nash strong-stationary
	point of $\EpecS$ and a second-order strong-stationary point of
	$\PS$ is a second-order strong-stationary point of
	$\EpecS$. {
		For $i = 1, \hdots, N$, one may define a
		corresponding relaxed NLP associated with the $i\th$ leader's
		problem, namely $\LiS(x\mi,y\mi)$, by 
		employing the index sets $\Iscr_i$. These index sets are defined
		using $\tilde \Iscr$ and are given by\footnote{{The index sets associated with $\PS_{rnlp}$
			are denoted by $\tilde \Iscr$ while the index sets employed
				for specifying leader
				$i$'s relaxed NLP are denoted by $\Iscr_i$. Note that
					the cardinality of $\tilde \Iscr_i$ and $\Iscr_i$
					differs when considering the relaxed NLPs
					corresponding to $\Epec^{\ae}$ since every leader level problem
					contains equilibrium constraints of all the leaders.}} $\Iscr_i = \{\tilde \Iscr_1,
						\hdots, \tilde \Iscr_N\}.$

	\begin{proposition}\label{prop-stat} [Strong stationary points of $\PS$ and Nash
	strong stationary points of $\EpecS$]
Consider the multi-leader multi-follower game with shared
constraints $\EpecS$. Suppose $(x,y)$ is a strong-stationary point
	of $\PS$ and satisfies \eqref{ss-pae} with Lagrange multipliers $\left(\eta_i,\mu_i,\lambda_i,\beta_i\right)_{i=1}^N$.
	Then $(x,y)$ is a Nash strong-stationary point of $\EpecS$ and for
		$i = 1, \hdots, N$, $(x,y)$ satisfies \eqref{ss-Li} with
			Lagrange multipliers  defined as $\left(\bar \eta_i,\bar \mu_i,\bar \lambda_i,(\bar \beta_i^k)_{k=1}^N
							\right) = \left(\eta_i,\mu_i,\lambda_i,(\beta_k)_{k=1}^N
							\right)$
	Furthermore, if $(x,y)$ is a second-order strong stationary point of
		$\PS$ with multipliers $\left(\eta_i,\mu_i,\lambda_i,\beta_i\right)_{i=1}^N$, then $(x,y)$ is a
		Nash second-order strong stationary point of $\EpecS$ with
		firm $i$'s multipliers given by $\left(\bar \eta_i,\bar \mu_i,\bar \lambda_i,(\bar \beta_i^k)_{k=1}^N
									\right)$.
	\end{proposition}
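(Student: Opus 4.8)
The plan is to verify that the prescribed multiplier assignment converts the single system of strong-stationarity conditions \eqref{ss-pae} for $\PS$ into the $N$ families of conditions \eqref{ss-Li}, one per leader. Two structural facts do all the work. First, $\pi$ is a potential function, so by \eqref{eq:potdiff} we have $\nabla_i\pi\equiv\nabla_i\varphi_i$, and hence $\nabla_{x_i}\pi=\nabla_{x_i}\varphi_i$ and $\nabla_{y_i}\pi=\nabla_{y_i}\varphi_i$ blockwise. Second, $\OmegaS$ is a shared constraint by Proposition \ref{eq-orig-ae}(i), so leader $i$'s feasible region is exactly the slice of $\FscrS$ obtained by freezing $(x\mi,y\mi)$; consequently the constraints of $\LiS(x\mi,y\mi)$ involving $(x_i,y_i)$ are precisely the constraints of $\PS$ involving those variables, and the index-set identity $\Iscr_i=\{\tilde\Iscr_1,\dots,\tilde\Iscr_N\}$ guarantees the two relaxed NLPs partition the complementarities identically.

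First I would treat the first-order claim. Setting $(\bar\eta_i,\bar\mu_i,\bar\lambda_i,(\bar\beta_i^k)_{k=1}^N)=(\eta_i,\mu_i,\lambda_i,(\beta_k)_{k=1}^N)$, I would read off the $x_i$-block and $y_i$-block of the gradient equation in \eqref{ss-pae}: after replacing $\nabla_{x_i}\pi$ and $\nabla_{y_i}\pi$ by $\nabla_{x_i}\varphi_i$ and $\nabla_{y_i}\varphi_i$, these coincide termwise with the two gradient equations of \eqref{ss-Li}. The sum $\sum_k\nabla_{x_i}G(y_k,x)\t\beta_k$ reappears verbatim as $\sum_k\nabla_{x_i}G(y_k,x)\t\bar\beta_i^k$, while in the $y_i$ equation only the $i$-th complementarity survives, since $y_i$ enters only $G(y_i,\cdot)$, yielding the lone $\bar\beta_i^i=\beta_i$ term. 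The sign and complementarity conditions transfer by direct substitution: leader $i$'s conditions on $\bar\lambda_i$ and $\bar\beta_i^i$ are exactly the $i$-th conditions of \eqref{ss-pae}, and for $k\neq i$ the conditions on $\bar\beta_i^k$ are the $k$-th conditions of \eqref{ss-pae} read at the feasible point $(x,y)$, using $\bar\beta_i^k=\beta_k$ together with the fact that $y_k$ and $G(y_k,x)$ exhibit the same biactivity at $(x,y)$ whether viewed in $\PS$ or in leader $i$'s problem.

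For the second-order claim I would first establish a direction-lifting step: if $s_i\in S_i^*(x,y)$, then its zero-padded extension $\hat s_i=(0,\dots,s_i,\dots,0)$ lies in $S^*(x,y)$. The condition $\nabla_i\varphi_i\t s_i=0$ in \eqref{feas-dir-Li} gives $\nabla\pi\t\hat s_i=0$ in \eqref{feas-dir} by the potential identity, and the active-constraint equalities and inequalities match because the only constraints of $\PS$ with nonzero $(x_i,y_i)$-gradient are exactly those of leader $i$'s problem, with a common active set $\Ascr_i\subseteq\tilde\Ascr$. Next, differentiating \eqref{eq:potdiff} once more yields $\nabla^2_{(x_i,y_i)}\varphi_i=\nabla^2_{(x_i,y_i)}\pi$ on the diagonal block; since the constraint terms of the two Lagrangians carry identical multipliers and agree on this block, the $(x_i,y_i)$-Hessian $\nabla^2_{x_i,y_i}\Lscr_i$ of leader $i$'s Lagrangian equals the corresponding diagonal block of $\nabla^2_{x,y}\Lscr$. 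Hence $s_i\t\nabla^2_{x_i,y_i}\Lscr_i\,s_i=\hat s_i\t\nabla^2_{x,y}\Lscr\,\hat s_i>0$ by second-order strong stationarity of $\PS$, so $(x,y)$ is a Nash second-order strong stationary point of $\EpecS$.

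The hard part will be the bookkeeping for the cross constraints $G(y_k,x)\geq0$ with $k\neq i$: in $\LiS$ the vector $y_k$ is frozen, so this complementarity degenerates into equalities on the support of $y_k$ and inequalities off it, and one must check that the inherited multiplier $\bar\beta_i^k=\beta_k$ respects the biactive sign conditions and that the relaxed-NLP index partition remains consistent with this degeneration. A secondary subtlety is ensuring the lifted direction $\hat s_i$ satisfies the full active-set system defining $S^*(x,y)$ and not merely a subset of its constraints; this is where the identity $\Iscr_i=\{\tilde\Iscr_1,\dots,\tilde\Iscr_N\}$ and the shared-constraint structure are indispensable.
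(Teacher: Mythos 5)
Your proposal is correct and follows essentially the same route as the paper's proof: the same multiplier identification $(\bar \eta_i,\bar \mu_i,\bar \lambda_i,\bar \beta_i^k)=(\eta_i,\mu_i,\lambda_i,\beta_k)$, the same use of the potential identity $\nabla_i\pi=\nabla_i\varphi_i$ and of the agreement of the diagonal Hessian blocks $\nabla^2_{x_i,y_i}\Lscr_i=\nabla^2_{x_i,y_i}\Lscr$, and the same zero-padding of a direction $s_i\in S_i^*(x,y)$ into the critical cone $S^*(x,y)$ via the active-set construction. The only difference is presentational --- you argue the second-order claim directly while the paper phrases it as a proof by contradiction --- which is a logically equivalent formulation of the same argument.
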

	\begin{proof}
	Suppose $(x,y)$ is a strong stationary point $\PS$, \ie, suppose there exist
		multipliers $\eta,\mu, \lambda$ and $\beta$ such that for $(x,y)$, system \eqref{ss-pae} holds. 
	For each kind of $\EpecS$, we show that $(x,y)$ is a Nash strong
	stationary point of $\EpecS$. 
		One may then construct Lagrange multipliers to satisfy \eqref{ss-Li}. 
By comparison of \eqref{ss-pae} and \eqref{ss-Li}, we see that \eqref{ss-Li} 
admits a solution $(x,y)$ with multipliers $\bar{\eta}_i =\eta_i,\bar{\mu}_i = \mu_i,\bar{\lambda}_i=\lambda_i$ and $ \bar{\beta}_i^k = \beta_k$ for all $i,k$.  

	Now assume that $(x,y)$ is a second-order strong stationary point of
	$\PS$. To show that $(x,y)$ is a Nash second-order strong stationary
	point of $\EpecS$, we construct Lagrange multipliers as above. It is
	easy to see, that by construction, $\nabla_{x_i,y_i} \Lscr =
	\nabla_{x_i,y_i} \Lscr_i$ and 		$\nabla^2_{x_i,y_i} \Lscr_i =
	\nabla^2_{x_i,y_i} \Lscr$  for all $i \in \Nscr$, where $\Lscr_i$ is
	the Lagrangian of $\LiS$ evaluated at $(x,y)$ and the above
	constructed Lagrange multipliers. Furthermore, by comparing the
	feasible region of $\LiS$ with $\FscrS$, we observe that the active
	sets of $\LiS$ can be defined as $\Ascr_i(x,y) = 
		\left\{\tilde
		\Ascr_i^c(x,y), \tilde \Ascr_i^d(x,y), \tilde \Ascr_1^e(x,y),
		\hdots, \tilde \Ascr_N^e(x,y)\right\}.$ 

	Given the specification of the active set, we may now define a
		relaxed NLP corresponding to this active set as well as define
		the corresponding critical cone $S_i^*(x,y)$.
To prove the claim, we proceed by contradiction. If $(x,y)$ is not a
Nash second-order strong stationary point, then for some $i \in \{1,
	 \hdots, N\}$, the point $(x_i,y_i)$ does not satisfy second-order
	 strong stationary conditions, given $(x\mi,y\mi)$. Then there
	 exists a $w_i$ such that $w_i \in S_i^*(x,y)$ such that
	 $w_i\t\nabla^2_{x_i,y_i} {\Lscr^*_i} w_i \leq 0$.
We may now define ${\bf w} $ such that  $$ {\bf w} \triangleq (w_1,\hdots,w_N), $$ where
		$w_j = 0, j \neq i$. Since $w_i \in S_i^*(x,y)$, it follows that 
		$$ 0 = w_i\t \nabla_{x_i,y_i} \varphi_i(x,y) = w_i\t \nabla_{x_i,y_i}
		\pi(x,y).$$ 
		By definition of ${\bf w}$, it follows that ${\bf w}\t
		\nabla_{x,y} \pi(x,y) = 0.$ From the definition of ${\bf w}$ and
		by noting the constructions of $\Ascr_i(x,y)$, it can be seen
			that ${\bf w} \in S^*(x,y).$
%
As a	consequence, we have that 
		$$  0 \geq w_i^T \nabla_{x_i,y_i}^2 \Lscr_i w_i = {\bf w}^T
		\nabla_{x_i,y_i}^2 \Lscr  \ {\bf w}.$$   
		But this contradicts the hypothesis that $(x,y)$ is a second-order strong
		stationary point of $\PS$ and the result follows. 							
	\end{proof}

Notice that the form of equilibrium constraints was only used when considering stationarity concepts. The global equilibrium results did not use the explicit form of the equilibrium constraints and as such are applicable even for extensive form games.

As a final note, we recall it is not entirely necessary to employ the algebraic
characterization of the constraints in articulating strong stationarity
(cf.~\cite{flegel07optimality,henrion10note}). For instance, the authors examine the
optimality conditions of a disjunctive program  defined as 
\begin{align}
	\begin{aligned} 
		\min_{x} \quad & f(x) \\
		\st 	\quad & x \in \Lambda \triangleq \bigcup_{i=1}^m
		\Lambda_i, 
	\end{aligned}
\end{align}
where $\Lambda_i$ is a convex polyhedron for $i = 1, \hdots, m$. Such a
problem captures most MPEC models considered in the research literature.
The authors proceed to show that if the generalized Guignard constraint
qualification and a suitably defined intersection property holds at a
local minimizer $z$, then $z$ is a strong stationary point. Note that
the definition of strong stationarity relies on using the Fre\'{c}het
normal cone associated with $\Lambda_i$ rather than the algebraic
characterization of the sets. 

\subsection{Existence of global equilibria via fixed-point
	arguments}\label{sec:35}
The reaction map of the \mlmfg $ \Epec $ does not have the
properties required for applying fixed point theorems. However, the modified
formulation $ \Epec^{\ae} $, because of its shared constraint structure,
allows for the construction of a \textit{modified reaction map} whose
fixed points are equilibria of $\EpecS$ and has properties that are more
favorable for the application of fixed point theory. This leads to an
existence result for $\EpecS$ that uses fixed point theory and does
not assume the existence of a potential function.  We touch upon this
topic in this section. 

To define the modified reaction map let $\Psi :(X\times Y) \times (X \times Y)\rightarrow \Real$ be given by 
\begin{equation}
\Psi(x,y,\bar{x},\bar{y}) \triangleq \ds \sum_{i=1}^N \varphi_i(\bar{x}_i,\bar{y}_i;x^{-i}) \qquad \forall (x,y),(\bar{x},\bar{y}) \in X \times Y. \label{eq:psi}
\end{equation}
 and consider the {\em modified reaction map} 
 $\UpsilonS :X \times Y \rightarrow 2^{\FscrS}$, defined as 
\begin{equation}
 \UpsilonS(x,y) \triangleq \left\{ (\bar{x},\bar{y}) \in \FscrS  \ | \ \Psi(x,y,\bar{x},\bar{y}) = \inf_{(u,v) \in \FscrS} \Psi(x,y,u,v)\right\}. \label{eq:ups}
\end{equation}
We show below that a fixed point of $\UpsilonS$ is an equilibrium of
$\EpecS$. The map $\UpsilonS$ is analogous to that used by Rosen
\cite[Theorem 1]{rosen65existence}.  


\begin{theorem} [Fixed points of $\UpsilonS$ and equilibria of
$\EpecS$]\label{lem:rscr}
Consider the game \mlmfg $\EpecS$ with a feasible region mapping
$\OmegaS$. If $\UpsilonS$ admits a fixed point, the game $\EpecS$ admits an equilibrium.
\end{theorem}
\begin{proof}
Assume that the claim is false, \ie, assume there exists an  $(x,y) \in
\UpsilonS(x,y)$ such that for some $(u,v) \in \OmegaS(x,y)$ and an index $i \in \{1, \hdots,
	N\}$ we have 
$$\varphi_i(u_i,v_i;x^{-i}) < \varphi_i(x_i,y_i;x^{-i}). $$
Since $(u,v) \in \OmegaS(x,y)$, 
and since $\OmegaS$ satisfies \eqref{eq:shared}, 
we must have $(u_i,x^{-i},v_i,y^{-i}) \in
\Fscr$. But this means 
$$\Psi(x,y,(u_i,x^{-i}),(v_i,y^{-i})) <
\Psi(x,y,x,y),$$ a contradiction to $(x,y) \in \UpsilonS(x,y)$. 
\end{proof}
\noindent We further have that 	$\UpsilonS$ is upper semicontinuous under mild conditions.
\begin{lemma}Let $\Psi$ be continuous on $X \times Y$ and assume that $ X \times Y $ is compact. Then \label{lem:basic}
$\UpsilonS$ is upper semicontinuous. 
If $\UpsilonS$ is single-valued, then it is continuous (as a single-valued function).
\end{lemma}
\begin{proof}
By compactness of $ X \times Y $ the infimum in the definition of $
\UpsilonS $ is achieved. Upper semi-continuity of $ \UpsilonS $ follows
from classical stability results (see \eg, Hogan \cite{hogan73point}).
The last claim follows as a special case of upper semicontinuity of
set-valued maps for single-valued maps. 
\end{proof}
By using the Eilenberg-Montgomery 
	fixed point theorem~\cite{eilenberg46montgomery}, we obtain an existence result.
\begin{theorem} \label{thm:eilen}
Consider the \mlmfg $\EpecS$ where the objective functions $\varphi_i,
i\in \Nscr$ are continuous. Suppose $X\times Y$ is nonempty, compact and
convex. Suppose if $\UpsilonS$ satisfies one of the following:
\begin{enumerate}
\item[(i)] Single-valued on $X\times Y$; 
\item[(ii)] Multi-valued on $X\times Y$ with contractible images.
\end{enumerate}
Then $\UpsilonS$ admits a fixed point and  $\EpecS$ admits an equilibrium.
\end{theorem}
\begin{proof}
$\UpsilonS$ may be taken to be a mapping from the compact convex set $X \times Y$ to subsets of
$X \times Y$. If $\UpsilonS$ is single-valued, Lemma \ref{lem:basic}
implies that it is continuous. Consequently, by Brouwer's fixed point
theorem there exists a fixed point of $\Upsilon$. If $\UpsilonS$ is
multi-valued, then Lemma \ref{lem:basic} shows that $\UpsilonS$ is
upper semicontinous. Then since $\UpsilonS$ is contractible-valued, by the
Eilenberg-Montgomery fixed point theorem~\cite[Theorem
1]{eilenberg46montgomery}, there exists a fixed point of $\UpsilonS$. In
each of these cases, since there exists a fixed point of $\UpsilonS$,
   from Theorem \ref{lem:rscr}, $\EpecS$ admits an equilibrium.
\end{proof}

A natural question is when such conditions are useful. In general,
convexity or contractibility  of images of $\UpsilonS$ is not immediate;
however, if there are specific settings where such claims can be made,
then the aforementioned results are powerful in that they do not require
leader payoffs to admit potential functions.  It is not true that every
equilibrium of the \mlmfg with shared constraints is a fixed point of
$\UpsilonS$; existence of a fixed point to $\UpsilonS$ is only a
sufficient condition for such an equilibrium to exist.  This can be
checked easily by considering a hypothetical case with convex $\FscrS$,
wherein it is well known that fixed points of the reaction map and the
modified reaction map can be very different.
In~\cite{kulkarni09refinement}, Kulkarni and Shanbhag discuss these
issues in detail for convex \scgs; in general, there exist equilibria
that are not fixed points of $\UpsilonS$ and also games for which there
are equilibria, but no fixed points to $\UpsilonS$.

\begin{remarkc} {\bf (Relationship to variational equilibria in
			convex shared constraint games)}
When $\FscrS$ is convex, and $ \varphi_i(x_i;x\mi) $ is convex in $ x_i
$ for all $ x\mi $ and all $ i $, Lemma \ref{lem:rscr} and the map
$\UpsilonS$ also
has an interesting connection with the ``variational equilibrium''
\cite{facchinei07generalized,pang09convex,kulkarni09refinement} in games
with convex shared constraints. 	In this setting, these games are
typically referred to as generalized Nash games and equilibria of such
games are referred to as generalized Nash equilibria (GNE).  The
variational equilibrium is defined as the solution of the variational
inequality VI$(\FscrS,F)$, where $F = (\nabla_1 \varphi_1, \hdots,
		\nabla_N \varphi_N)$. By convexity, it can be easily seen that $
\VI(\FscrS,F) $ equals the set of fixed points of $\UpsilonS$.  The
variational equilibrium is the  generalized Nash equilibrium at which
the Lagrange multipliers, corresponding to the shared constraints, are
identical across players.  These multipliers can be interpreted as
the shadow prices of the associated constraints.  Furthermore, when
these prices are equal, the equilibria can be viewed as corresponding to
a uniform auction price while disparities in prices are a consequence of
discriminatory prices. The above observations form the basis of a
detailed study  of the VE and the GNE~\cite{kulkarni09refinement} where
we show that under general conditions, if a GNE exists, a VE also
exists, in which case the VE is said to be a \textit{refinement} of the
GNE~\cite{selten75reexamination,basar99dynamic}.  Furthermore, for
potential games with potential function $ \pi $, $ F\equiv \nabla \pi $,
		  whereby $ \VI(\FscrS,F) $ is equivalent to $ \VI(\FscrS,\nabla
				  \pi).$ Thus in a potential game with shared
		  constraints, every VE is also a stationary point of the
		  potential function over the shared constraint.  Coming back to
		  the game $ \Epec^{\ae} $, a stationary point of $ \PS $ is
		  therefore akin to a something like a VE of this formulation. 
\end{remarkc}

Theorem \ref{thm:eilen} did not invoke the existence of a potential
function. Nonetheless, there is a close relation between the minimizer
of the potential function, \ie the solution of problem $\PS$,  and the
fixed points of $\UpsilonS$. We formalize it through the following
definition.
\begin{definition} \label{def:lfp}
A {\em stationary fixed point} of $\UpsilonS$ is a point $(x,y) \in \FscrS$ with the property that $(x,y)$ is satisfies the stationarity conditions of the minimization of $\Psi(x,y,u,v)$ over $(u,v) \in \Fscr$ \ie,
\[\bar{\nabla}\Psi(x,y,x,y)\t d \geq 0 \quad \quad \forall\ d \in \Tscr((x,y);\FscrS), \]
where $\left.\bar{\nabla}\Psi(x,y,x,y) \triangleq \frac{\partial }{\partial (u,v)}\Psi(x,y,u,v)\right\rvert_{(u,v) = (x,y)}$. 
\end{definition}
If $ \Epec^{\ae} $ is a potential game with potential function $ \pi $, then we have $\nabla \pi(x,y) \equiv    \bar{\nabla}\Psi(x,y,x,y).$ Consequently, we have the following relation. 
\begin{proposition} \label{thm:lfp}
Let $\EpecS$ be a potential game with potential function $\pi$.
$ (x,y)\in \FscrS $ is a stationary point of the minimization $\pi$ over $\FscrS$ if and only if $ (x,y) $ is a stationary fixed point
of $\UpsilonS$. 
\end{proposition}

\section{Recovery of equilibria of $\Epec$}\label{sec:4}
The prior section has concentrated on the development of existence
	statements for the equilibria associated with the  shared-constraint
		(modified) equilibrium problem, denoted by $\EpecS$. While it
		has been shown that the equilibria of $\Epec$ are indeed
		equilibria of $\EpecS$, it remains unclear as to how one may
		obtain the equilibria of $\Epec$.  In Section~\ref{sec:41}, we
		consider settings where the Nash-stationary equilibria of
		$\Epec$ may indeed be recovered. A more refined statement is
		provided in Section~\ref{sec:42} under the assumption that
		follower equilibria are unique as a function of leader-level
		decisions.

\subsection{Obtaining Nash stationary points of $\Epec$}\label{sec:41}
In this section, we begin by providing an intuition about the
relationship between the equilibria of the original game and its shared
constraint modification by considering a convex generalized Nash game.
Consider a Nash game in which player $i$ has strategies $x_i,y_i$, objective $f_i(x_i,y_i;x\mi)$ and 
a nonlinear constraint $h(x,y_i) \geq 0$, where for any $x\mi$,
  $h(x_i,y_i;x\mi)$ 
and $f_i(x_i,y_i;x\mi)$ are concave and convex in $x_i,y_i$,
	respectively.  Specifically, player $i$ solves $A_i(x\mi)$, defined next.
$$	\problemsmall{A$_i(x\mi)$}
	{x_i,y_i}
	{f_i(x_i,y_i;x\mi) }
				 {\begin{array}{r@{\ }c@{\ }l}
		h(x,y_i)  \geq 0. \qquad (\lambda_1)
\end{array}} $$
We refer to this game as $G$ and corresponds to $\Epec$. Suppose A$_i(x\mi)$ is a convex optimization problem for each $x\mi$. The shared constraint modification of this problem akin to the ``ae'' formulation is given by the
following:
$$	\problemsmall{A$^{\ae}_i(x\mi,y\mi)$}
	{x_i,y_i}
	{f_i(x_i,y_i;x\mi) }
				 {\begin{array}{r@{\ }c@{\ }l}
		h(x,y_1)  & \geq & 0 \ \quad (\lambda_{11}) \\
				& \vdots & \\
		h(x,y_N)  & \geq & 0. \quad (\lambda_{1N})
\end{array}} $$
Notice that since $h(\cdot,\cdot)$ is concave in $x_i,y_i$ for all
$x\mi$, A$^{\ae}_i(x\mi,y\mi)$ is a convex optimization problem. Denote this game as $G^{\ae}.$ 
Our first result relates equilibria of $G$ to that of $G^{\ae}$:
\begin{lemma} The point $(x,y)$ is an equilibrium
of $G$ with multipliers $\lambda_1, \hdots, \lambda_N$ if and only if  $(x,y)$ is an equilibrium of $G^{\ae}$ with
\begin{align} \lambda_{ii}  = \lambda_i \mbox{ and } \lambda_{ij}  = 0, \forall j
\neq i.\label{multi-ae}\end{align} 
\end{lemma}
\begin{proof}
By the same logic as in Prop,~\ref{eq-orig-ae} (iii), any equilibrium of $G$
	is an equilibrium $G^{\ae}$. Since problems $A_i,i \in \Nscr,$ and $A_i^{\ae}, i\in \Nscr,$ are convex optimization problems, the aggregated 
	KKT conditions of individual players are necessary and sufficient for $(x,y)$ to be an equilibrium of $G$ and $G^{\ae},$ respectively. 
\eqref{multi-ae} now 
	follows from a examination of the KKT conditions.
\end{proof}

A more general
statement is available in the context of $\Epec$ and $\EpecS$ by
examining the strong stationarity conditions (the proof is straightforward; we skip it).

\begin{proposition} [Strong stationary points of  $\EpecS$ and $\Epec$]
Consider the multi-leader multi-follower games $\Epec$ and $\EpecS$.
Then the following hold:
\begin{enumerate}
\item $(x,y)$ is a Nash strong-stationary point of $\EpecS$ satisfying \eqref{ss-Li} with Lagrange
multipliers  $\left(\bar \eta_i,\bar \mu_i,\bar
\lambda_i,(\bar \beta_i^k)_{k=1}^N \right)$ and   $(\bar \beta^i_k)
	= 0, \forall k \neq i, \forall i$ if and only if $(x,y)$ is a Nash
	strong-stationary point of $\Epec$ with multipliers $\left(\bar \eta_i,\bar \mu_i,\bar
\lambda_i,\bar \beta_i^i \right)$.  
	\item Furthermore, $(x,y)$ is a Nash second-order strong-stationary point of
	$\EpecS$ with firm $i$'s multipliers given by $\left(\bar \eta_i,\bar \mu_i,\bar \lambda_i,(\bar \beta_i^k)_{k=1}^N
									\right)$ and  $(\bar \beta^i_k)
	= 0, \forall k \neq i, \forall i$ if and only if  $(x,y)$ is a Nash
	second-order strong-stationary point of $\Epec$ with multipliers $\left(\bar \eta_i,\bar \mu_i,\bar
\lambda_i,\bar \beta_i^i \right)$. 
	\end{enumerate}
	\end{proposition}

 In effect, one can inspect the multipliers of an equilibrium of the
shared-constraint game to ascertain whether indeed such a point is an
equilibrium of the original equilibrium problem. Yet, much more can be
drawn from this observation. Through the shred-constraint modification,
a generalized Nash game is converted to a shared-constraint
(generalized) Nash game. Furthermore, this modified  game admits
a set of generalized Nash equilibria (GNE) that contain two
important sets of equilibria (each of which may be empty):\\

\noindent {\bf Equilibria of $\Epec$:} These equilibria correspond to
points characterized by multipliers that display a precise form as
specified by Prop.~\ref{prop-stat}.\\ 

\noindent {\bf Variational equilibria of $\EpecS$:} These equilibria are
defined by a common Lagrange multiplier across every agent. Such
equilibria have proved to be particularly relevant in the context of
convex shared-constraint Nash games where the {\em common} Lagrange
multiplier is seen as the {\em uniform auction price}~\cite{kulkarni09refinement}. Moreover, in the
context of convex Nash games, VE may be obtained through the  solution
of a  variational inequality problem~\cite{kulkarni09refinement,kulkarni12revisiting}. In
the current context, under an assumption of potentiality on the
leader-level problems, such equilibria may be derived by the solution to
a suitably defined  optimization problem, such as P$^{\rm ae}$. \\

We believe that viewing the equilibria of $\Epec$ as particular equilibria of
$\Epec^{\ae}$ allows for two important directions:
\begin{enumerate}
\item {\em Pathways to existence statements:} The shared constraint
formulation admits a larger set of equilibria, that includes equilibria
of the conventional formulation (if they exist). We have seen that under mild assumptions, existence
of equilibrium akin to variational equilibria can be guaranteed. This may be
a stepping stone towards developing an approach for
claiming existence of equilibria of $\Epec$.
\item {\em Tools for equilibrium computation and selection:} Equilibrium
computation is a crucial concern in the design of markets, a realm where
such problems routinely arise. Yet, such designs are plagued by a key
challenge in that equilibria are not readily computable. If the objectives admit a potential function, this formulation provides two crucial
benefits. First, it allows for computing global variational equilibria
through the solution of a single optimization problem. Second,  if one
takes the view that the conventional formulation is the ``correct''
formulation, the modification may provide a means to arriving at an equilibrium of the conventional formulation, provided it exists. 
\end{enumerate}

\subsection{Recovery of equilibria when follower equilibria are
	unique}
\label{sec:42}
%
\def\S{^{\rm ae}}
Assume that for every $x \in X$, $\Sscr(x)$ is a singleton belonging to $\cap_{i\in \Nscr} Y_i$ in which case we have $\Fscr = \{ (x,y) \ | \ x \in X, y = \Sscr^N(x)\} = \Fscr^{\ae}$. 
Substituting the follower equilibrium tuple $y$ in terms of $x$ in the
definition of $\Upsilon\us{^{\ae}}$, we define 
$\Gamma\S : X \rightarrow 2^X$, as follows:
$$ \Gamma\S(x) \triangleq \arg \min_{u \in X} \Psi(x,\Sscr^N(x),u,\Sscr^N(u)).$$
If $x$ is a fixed point of $\Gamma\S$, then $(x,\Sscr^N(x))$ an equilibrium of $\Epec^{\ae}$. 
  For simplicity of exposition we will refer to fixed points of $\Gamma\S$ as ``equilibria'' of $\Epec^{\ae}$.  Now consider the original formulation $\Epec$ and rewrite the leader problem L$_i$ in the following form.
$$
	\problem{L$_i(x^{-i})$}
	{x_i}
	{\varphi_i(x_i,\Sscr(x);x^{-i})}
				 {\begin{array}{r@{\ }c@{\ }l}
		x_i &\in& X_i, \\
		\end{array}}
	$$
It is easy to see that an ``equilibrium'' of this game is the same as a fixed point of $\Gamma : X \rightarrow 2^{X}$, where 
$$ \Gamma(x) \triangleq \arg \min_{u\in X} \sum_{i=1}^N \varphi_i(u_i,\Sscr(u_i,x^{-i});x^{-i}) = \arg \min_{u\in X} 
\Psi(x,\Sscr^N(x),u,\Sscr(u_1,x^{-1}),\hdots,\Sscr(u_N,x^{-N})). $$
This follows from noting that $ X $ is a Cartesian product of $ X_1,\hdots,X_N. $ 
The next theorem exploits the similarity between $\Gamma\S$ and $\Gamma$ 
to develop conditions under which the fixed points of $\Gamma\S$ are also fixed points of $\Gamma$. 
\begin{theorem}
Suppose for all $x \in X$, $\Sscr(x)$ is a singleton lying in $\cap_{i\in \Nscr} Y_i$ and let the objectives of players be such 
that 
$$ \Psi(x,\Sscr^N(x),u,\Sscr^N(u)) \leq  \Psi(x,\Sscr^N(x),u,\Sscr(u_1,x^{-1}),\hdots,\Sscr(u_N,x^{-N})), \qquad \forall \ u, x \in X .$$
Then every fixed point of $\Gamma\S$ is also a fixed point of $\Gamma$ and thus an equilibrium of $\Epec$. In particular, if $\Gamma\S$ admits a 
fixed point, the conventional formulation $\Epec$ admits an equilibrium.
\end{theorem}
\begin{proof}
If $x$ is a fixed point of $\Gamma\S$, 
$$ \Psi(x,\Sscr^N(x),x,\Sscr^N(x)) \leq \Psi(x,\Sscr^N(x),u,\Sscr^N(u)) \quad \forall u \in X.$$
By the hypothesis of the theorem, we have
$$ \Psi(x,\Sscr^N(x),x,\Sscr^N(x)) \leq \Psi(x,\Sscr^N(x),u,\Sscr(u_1,x^{-1}),\hdots,\Sscr(u_N,x^{-N})),$$
which means $x$ is a fixed point of $\Gamma$.
\end{proof}

\begin{remarkc}
Notice the difference between $\Gamma$ and $\Gamma\S$. Importantly, observe that a fixed point of  
one is not necessarily a fixed point of the other. This may come as a surprise, considering that Proposition \ref{lem:ae} shows that equilibria of $\Epec$  are also equilibria of $\Epec^{\ae}$. But this ``contradiction'' 
can be explained by noticing that a fixed point of $\Gamma$ {\em is} an equilibrium of 
$\Epec^{\ae}$, but such an equilibrium need not be a fixed point of $\Gamma\S$. Since the fixed point formulation 
through $\Upsilon^{\ae}$ or $\Gamma\S$ is only a sufficient
condition for the existence of equilibria of $\Epec^{\ae}$, there may
exist equilibria of these games that are not necessarily fixed points of
the $\Gamma\S$. 
\end{remarkc}

\section{An example: a hierarchical Cournot game}\label{sec:cour}

In this section, we present  a \mlmfg from~\cite{sherali84multiple}
which when formulated in the conventional form  
has {an} equilibrium. Through  this game, {we will demonstrate the validity 
of } Propositions \ref{lem:ae} and Theorem \ref{thm:pot}. 

Below, we modify this game
	in the form of $\Epec^{\ae}$ and show that the claim made in Proposition \ref{lem:ae}
holds: the equilibrium of this game is also an equilibrium of its
modification. The example shows that equilibrium conditions of $\Epec^{\ae}$ have more variables than the conditions of $ \Epec $, and
thus {allow for more} ``degrees of freedom''  for their satisfaction. Cournot games, as noted in Section \ref{sec:examples}, admit potential functions. 
We then calculate the minimizer of the potential function of this game (\ie, the solution of $\PS$) and show that it is an equilibrium of the modified game, thereby verifying 
Theorem \ref{thm:pot}.
\begin{examplec} \label{ex:sherali1}
Let $\Epec$ be a game with $N$ \textit{identical} leaders and $n$ \textit{identical} followers. 
The follower strategies conjectured by leader $i$ are denoted by $\{y^f_i\}_{f=1,\hdots,n}$ (we use $f$ to index followers) and 
we let $\overline{y}_i^{-f}$ denote $\sum_{j \neq f} y^j_i$.
Leader $i$ solves {the following parametrized problem:} 
$$	\problem{L$_i(x\mi,y\mi)$}
	{x_i,y_i}
	{\half c x_i^2 - x_i\left(a -b(x_i+\sum_{j \neq i}x_j+ \sum_{f=1}^n
			y^f_i)\right)   }
				 {\begin{array}{r@{\ }c@{\ }l}
	y^f_i & = & \mbox{SOL(F}(\bar{y}^{-f}_i,x_i,x\mi)),\quad \forall\ f,\\
	x_i &\geq& 0,
\end{array}} $$   
where $y_i^f \in \Real$ is the conjecture of leader $i$ of the equilibrium strategy of follower $f$. 
Follower $f$ solves the problem (F$(\bar{y}^{-f},x)$):
$$ \problem{F$(\bar{y}^{-f},x)$}
	{y^f}
	{\half c (y^f)^2 - y^f\left( a - b (y^f  +  \sum_{j\neq f} y^j + \sum_{i\in \Nscr} x_i) \right) }
				 {\begin{array}{r@{\ }c@{\ }l}
y_f & \geq & 0,	
\end{array}}$$
where constants $a,b,c$ are positive real numbers. Since these constants are the same for all 
followers, equilibrium strategies of all followers are equal.
Consequently the follower equilibrium tuple conjectured by leader $i$ is given by $y_i =(\widehat{y}_i,\hdots,\widehat{y}_i)$, where $\widehat{y}_i$ satisfies $\widehat{y}_i \in \SOL({\rm F}((n-1)\widehat{y}_i,x))$ (since $\bar{y}^{-f} = (n-1)\widehat{y}_i$). For any $x$, there is a unique $\widehat{y}_i$ that satisfies this relation, given by 
\begin{equation}
 \widehat{y}_i = \begin{cases}
        (a-b\sum_j x_j)/(c+b(n+1)) & \mbox{if} \ \ 0 \leq \sum_j x_j \leq a/b,\\
0 & \mbox{if}\ \ \sum_j x_j > a/b.
       \end{cases} \label{eq:yhat}
\end{equation}  
By considering only the first of above cases in \eqref{eq:yhat}, we get a restricted game where leader $i$ solves
$$	\problem{L$'_i(x\mi,\widehat{y}\mi)$}
	{x_i,\widehat{y}_i}
	{\half c x_i^2 - x_i\left[a -b\left(x_i+\sum_{j \neq i}x_j+ n\widehat{y}_i\right)\right]    
	 }
				 {\begin{array}{r@{\ }c@{\ }ll}
				 \widehat{y}_i & = & \frac{a-b\sum_{j}x_j}{c+b(n+1)},\quad \quad &:\lambdabar_i\\
				 \sum_j x_j    & \leq  & a/b, &: \mubar_i \\
	x_i &\geq& 0.
\end{array}} $$   
This is a generalized Nash game {with coupled but not} shared
constraints. However, since the optimization problems of the leaders are
convex (this is not obvious; see \cite[Lemma 1]{sherali84multiple} for a
		proof), we may use the {first-order} KKT conditions to derive
an equilibrium. Let $\lambdabar_i$ be the Lagrange multiplier
corresponding the constraint ``$\widehat{y}_i =
\frac{a-b\sum_{j}x_j}{c+b(n+1)}$''. The equilibrium conditions of this
game are 
\begin{align}\label{eq:exshared1}
\left\{
\begin{aligned}
 0 \leq x_i &\perp \left( c+b \right)x_i  -a + b \left(\sum_{j}x_j +n\widehat{y}_i\right) + \frac{b}{c+b(n+1)}\lambdabar_i + \mubar_i \geq 0, \\
 \widehat{y}_i & =  \frac{a-b\sum_{j}x_j}{c+b(n+1)},  \\
 0 \leq \mubar_i &\perp a/b  - \sum_j x_j \geq 0, \\
             0 &= nbx_i + \lambdabar_i. \quad \quad \quad \quad \quad  
\end{aligned} \right\} \forall\ i \in \Nscr.
\end{align}
We can verify that the tuple $x=x^*$ where all leaders play the same strategy $\widehat{x}$, \ie 
$x^*_i = \widehat{x}$ for all $i$ with  $\widehat{x}$ given by 
$$\ds \widehat{x}= \frac{a(b+c)}{b(b+c)(N+1) +c(b+c) + bcn},$$ 
satisfies equilibrium conditions for the restricted game $\{{\rm L}_i'\}_{i\in \Nscr}$. 
The optimal Lagrange multiplier is given by $\lambdabar_i^* = -nbx^*_i, \mubar^*_i = 0$. 
It can then be verified that this equilibrium also satisfies the requirement $\sum_i x^*_i < a/b$, 
whereby it is an equilibrium of the original game. The other case of $\widehat{y}_i=0$ does not result 
in an equilibrium that satisfies $\sum_i x_i >a/b$, and consequently $x^*$ is the only equilibrium.

\paragraph{Verifying Proposition \ref{lem:ae} (An equilibria of $\Epec^{}$
		is an equilibrium of $\Epec^{\ae}$):} Let us now consider this game modified as $\Epec^{\ae}$. 
$$	\problem{L$^{\ae}_i(x\mi,y\mi)$}
	{x_i,y_i}
	{\half c x_i^2 - x_i(a -b(x_i+\sum_{j \neq i}x_j+ \sum_{f=1}^n y^f_i))   }
				 {\begin{array}{r@{\ }c@{\ }l}
	y^f_k & = & \mbox{SOL(F}(\bar{y}^{-f}_k,x_k,x^{-k})),\quad \forall\ f, \forall\ k=1,\hdots,N\\
	 x_i &\geq& 0.
\end{array}} $$   
Notice that the equilibrium constraint is now for all $f$ and for all $k$. For any $k$, 
the equilibrium constraint may be simplified using \eqref{eq:yhat}, giving an equation in $\widehat{y}_k$. It is easy to check that this game also admits no equilibrium with $\sum_j x_j >a/b$. Thus, this game is equivalent to the game where $\sum_j x_j $ is constrained to be in $[0,a/b]$. For such values of $\sum_j x_j$, the first case of \eqref{eq:yhat} applies, and it gives us a game where leader $i$ solves
$$	\problem{L$^{\ae}_i(x\mi,\widehat{y}\mi)$}
	{x_i,y_i}
	{\half c x_i^2 - x_i\left[a -b\left(x_i+\sum_{j \neq i}x_j+ n\widehat{y}_i\right)\right]    }
				 {\begin{array}{r@{\ }c@{\ }ll}
	\widehat{y}_k &= &         \frac{a-b\sum_j x_j}{c+b(n+1)}, \quad &:\lambda_{i}^k,\quad k=1,\hdots,N \\
	\sum_j x_j & \leq & a/b, \quad &: \mu_i,\\
	 x_i &\geq& 0.
\end{array}} $$   
This is a generalized Nash game with (convex) shared constraints and convex optimization problems for leaders. Let $\lambda_{i}^k$ be the 
Lagrange multiplier corresponding to the constraint ``$\widehat{y}_k =         \frac{a-b\sum_j x_j}{c+b(n+1)}$'' 
in the problem L$_i$. The equilibrium conditions for the 
generalized Nash equilibrium (see \cite{kulkarni09refinement}) of this game are
\begin{align} \label{eq:exshared}
\left\{
\begin{aligned}
 0 \leq x_i &\perp (b+c)x_i   -a + b \left(  \sum_{j} x_j + n\widehat{y}_i\right) + \frac{b}{c+b(n+1)}\sum_{j =1}^N \lambda^j_i + \mu_i\geq 0,\\
 \widehat{y}_i &=          \frac{a-b\sum_j x_j}{c+b(n+1)}, \\
 0 \leq \mu_i &\perp a/b -\sum_j x_j    \geq 0, \\
        0 &=    nbx_i + \lambda^i_i,  \quad \quad \quad \quad \quad  
	\end{aligned}\right\}\forall\ i \in \Nscr.
\end{align}
Notice that the Lagrange multipliers $\lambda_i^j$ for $j\neq i$ are
unconstrained barring their presence in the first condition of
\eqref{eq:exshared}.  Comparing \eqref{eq:exshared} and
\eqref{eq:exshared1},  we see that if $\lambdabar^*,x^*$
solve system \eqref{eq:exshared1}, then $x=x^*$ and $\lambda_i^j =
\lambdabar^*_i\I{j=i}$ for all $i,j\in \Nscr$ gives a solution to  system
\eqref{eq:exshared}. Consequently, an equilibrium of the original game
$\Epec$ is an equilibrium of $\Epec^{\ae}$. We have thereby verified
Proposition \ref{lem:ae} for this problem.

\gap

The presence of surplus Lagrange multipliers provides us with more
variables than the number of equations, whereby existence of solutions
is easier to guarantee. An equilibrium of $\Epec$ is an equilibrium of
the modified game $\Epec^{\ae}$ with a specific configuration of the
vector of Lagrange multipliers. Consequently, if an equilibrium exists
to $\Epec^{\ae}$, there is no guarantee that there exists one to the
original game $\Epec$.

\paragraph{Verifying Theorem \ref{thm:pot} (Global minimizer of $\pi$ is
		an equilibrium of $\Epec^{\ae}$):} Applying the same arguments
as before, we can effectively consider the strategies of leader $i$ in game
$\Epec^{\ae}$ as $x_i$ and $\widehat{y}_i$. Further, suppose the
function $\pi$ is given by 
\[\pi(x,\widehat{y}) = \half c \sum_i x_i^2 - a \sum_i x_i +b\left( \sum_i x_i^2  + \sum_{i<j}x_ix_j\right)  + nb \sum_i x_i \widehat{y}_i,  \]
where $\widehat{y} \triangleq (\widehat{y}_1,\hdots,\widehat{y}_N)$. 
Notice that the map $F$ (cf., Lemma \ref{lem:jac}) is given by
\[F(x,\widehat{y}) =\ds \pmat{\frac{\partial \varphi_i}{\partial x_i}\\ \frac{\partial \varphi_i}{\partial \widehat{y}_i}}_{i \in \Nscr}=\pmat{(b+c)x_i   -a + b \left(  \sum_{j} x_j + n\widehat{y}_i\right) \\ 
nbx_i}_{i\in \Nscr},    \]
and that $\nabla \pi \equiv F$, whereby $\pi$ is a potential function for $\Epec^{\ae}$. The set $\Fscr^{\ae}$ for this game is 
\[\Fscr^{\ae} = \{(x,\widehat{y})\ |\ x \geq 0,\ \widehat{y}_i {\rm\ satisfies \ \eqref{eq:yhat}} \ \forall\  i\}.    \]
We now determine the global minimizer of $\pi$ over $\Fscr^{\ae}$. A
{significant} difficulty in characterizing the global minimizer of $\pi$ is
that $\pi$ is not necessarily convex (despite the convexity of the
		objectives of leaders in their own variables). 

\gap

We argue as follows. By membership of $(x,\widehat{y})$ in $\Fscr^{\ae}$, 
we either have 
$\widehat{y}_i =  \frac{a-b\sum_j x_j}{c+b(n+1)}$ for all $i$ or we have $\widehat{y}_i = 0$ for all $i$. 
Substituting for $\widehat{y}$, we can write $\pi$ as a function only of $x$ (with a slight abuse of notation)
\[\pi(x) =  \begin{cases}
\half c \sum_i x_i^2 - a \sum_i x_i +b\left( \sum_i x_i^2  + \sum_{i<j}x_ix_j\right)     + nb\frac{a-b\sum_i x_i}{c+b(n+1)} \sum_ix_i & \eef 0 \leq \sum_i x_i \leq a/b, \\
\half c \sum_i x_i^2 - a \sum_i x_i +b\left( \sum_i x_i^2  + \sum_{i<j}x_ix_j\right)   & \eef \sum_i x_i > a/b.
                            \end{cases}
\]
By symmetry, the values $x_i$ that minimize $\pi$ are equal for all $i$. Let $x_i = x'$ for all $i$ be the minimizer. 
Then, 
\[\pi(x') =  \begin{cases}
\half N c x'^2 - aNx' +b\left( N x'^2  + \frac{N(N-1)}{2}x'^2\right)     + nb\frac{a-bNx'}{c+b(n+1)} Nx' & \eef 0 \leq x'\leq a/(Nb), \\
\half N c x'^2 - a N x' +b\left( N x'^2  + \frac{N(N-1)}{2}x'^2\right)   & \eef x'> a/(Nb).
                            \end{cases}
\]
The right hand derivative of $\pi$ at $x'=a/b$ is positive, $\nabla \pi(x')^+|_{x'=a/b} = N[\frac{ac}{Nb} -a + a(N+1)]>0$. Furthermore $\pi$ is increasing and coercive for $x'>a/(Nb)$, and consequently the minimizer of $\pi$ lies in $[0,a/(Nb)]$. 
Since $x'$ is a global minimizer of $\pi$, $x'$ necessarily satisfies the first-order KKT conditions for the minimization of $\pi$ over $[0,a/(Nb)]$:
\begin{align}
 0 \leq x' &\perp N\left( (b+c)x' -a + bNx' + nb \frac{a-bNx'}{c+b(n+1)}- \frac{nb^2Nx'}{c+b(n+1)}\right) + \mu' \geq 0, \label{eq:exshared2}\\
 0 \leq \mu' &\perp a/(Nb) - x' \geq 0, \non
\end{align}
where $\mu'$ is the Lagrange multiplier for the constraint `$a/(Nb) -
x'\geq 0$'. If $x',\mu'$ is a solution of system \eqref{eq:exshared2},
	then $x_i=x'$, $\mu_i= \mu'$ and $\lambda_i^j =-nbx_j=-nbx'$ for all
	$i,j\in \Nscr$, solves system \eqref{eq:exshared} for the equilibrium
	of $\Epec^{\ae}$. Consequently $x=(x',\hdots,x')$ is an equilibrium
	of $\Epec^{\ae}$. This verifies Theorem \ref{thm:pot}.
	
	 {It should be emphasized 	that we have claimed that a solution to the concatenated
		first-order KKT conditions of the minimization of $\pi$ over $\Fscr^{\ae}$ is a global equilibrium of $\Epec^{\ae}$, a claim that
		is valid because the leader problems in $\Epec^{\ae}$ have been reduced to convex
		problems. In the case where $\Sscr$ is single-valued (as it was in this example), this is 
		possible because we could argue that for values $(x,y)$ of interest, the equation $y=\Sscr(x)$ is linear in $x,y$.} 
\end{examplec}

\section{Conclusions}\label{conc}
To summarize, while general existence results for the original formulation of \mlmfgs (more generally EPECs) are rare, we observed that a modified
formulation in which each player is constrained by the equilibrium
constraints of all players contains the equilibria of the original
game when this game does indeed admit equilibria.  This modified 
game admits  a shared constraint structure and when the leaders' objectives admit a potential function, the set of
global minimizers of the potential function over the shared constraint
are the equilibria of the modified \mlmfg. Similar statements were made relating the stationary points of such a problem to
the associated Nash B-stationary and strong-stationary equilibria.  In
effect, the above results reduced the question of the existence of an
equilibrium to that of the solvability of an optimization problem,  in
particular a mathematical program with equilibrium constraints.  This
solvability can be claimed under fairly standard conditions that are
tractable and verifiable -- \eg, coercive objective over a nonempty
feasible region -- and existence of a global equilibrium is seen to follow. 

It was further seen that the equilibria of the original
	formulation can be viewed as equilibria of the shared-constraint
		modification in which the associated Lagrange multipliers take
		on a specific form. This
		understanding may have much potential in deriving existence
		statements as well as computational schemes for the equilibria
		arising from  the original formulation.  
 We concluded with an
	application of our findings on a multi-leader multi-follower
	symmetric Cournot game.

\bibliographystyle{../Mylatexfiles/plainini}
\bibliography{../Mylatexfiles/ref,ref2}
\end{document}